\theoremstyle{plain}
\newtheorem{thm}{\protect\theoremname}
\theoremstyle{plain}
\newtheorem{lem}[thm]{\protect\lemmaname}
\theoremstyle{plain}
\newtheorem{prop}[thm]{\protect\propositionname}
\theoremstyle{definition} 
\newtheorem{rem}[thm]{\protect\remarkname}
\theoremstyle{definition}
\newtheorem{example}[thm]{\protect\examplename}
\theoremstyle{plain}
\newtheorem{cor}[thm]{\protect\corollaryname}
\pgfplotsset{compat=1.9}
\newtheorem{setting}[thm]{Setting}
\newtheorem{question}[thm]{Question}
\newtheorem*{thmA}{Theorem A}
\newtheorem*{thmB}{Theorem B}
\providecommand{\corollaryname}{Corollary}
\providecommand{\examplename}{Example}
\providecommand{\lemmaname}{Lemma}
\providecommand{\propositionname}{Proposition}
\providecommand{\remarkname}{Remark}
\providecommand{\theoremname}{Theorem}
\begin{document}
\title{TTF classes generated by silting modules}
\author{Alejandro Argud\'in-Monroy, Daniel Bravo, Carlos E. Parra\thanks{The first author was supported by the Project PAPIIT-IN100124 Universidad Nacional Aut\'onoma de M\'exico, the second and third  authors were supported by ANID+FONDECYT/REGULAR+1240253.}}

\maketitle
\global\long\def\Mod{\operatorname{Mod}}%

\global\long\def\Add{\operatorname{Add}}%

\global\long\def\Ext{\operatorname{Ext}_{R}}%

\global\long\def\Mor{\operatorname{Mor}}%

\global\long\def\Proj{\operatorname{Proj}}%

\global\long\def\Coker{\operatorname{Coker}}%

\global\long\def\Ker{\operatorname{Ker}}%

\global\long\def\im{\operatorname{Im}}%

\global\long\def\Hom{\operatorname{Hom}_{R}}%

\global\long\def\Ann{\operatorname{Ann}}%

\global\long\def\Gen{\operatorname{Gen}}%

\global\long\def\Ab{\operatorname{Ab}}%

\global\long\def\c{\mathfrak{c}}%
\global\long\def\t{\mathfrak{t}}%

\newcommandx\triple[3][usedefault, addprefix=\global, 1=\mathcal{C}, 2=\mathcal{T}, 3=\mathcal{F}]{(#1,#2,#3)}%

\newcommandx\suc[5][usedefault, addprefix=\global, 1=N, 2=M, 3=K, 4=, 5=]{#1\overset{#4}{\hookrightarrow}#2\overset{#5}{\twoheadrightarrow}#3}%

\newcommand{\Rmod}{R\text{-}{\rm Mod}}
\newcommand{\Tr}{\rm Tr}

\begin{abstract}

We study the conditions under which a TTF class in a module category over a ring is silting. Using the correspondence between idempotent ideals over a ring and TTF classes in the module category, we focus on finding the necessary and sufficient conditions for $R/I$ to be a silting $R$-module, and hence for the TTF class $\Gen(R/I)$ to be silting, where $I$ is an idempotent two-sided ideal of $R$. In our main result, we show that $R/I$ is a silting module whenever $I$ is the trace of a projective $R$-module. Furthermore, we demonstrate that the converse holds for a broad class of rings, including semiperfect rings.

\end{abstract}

\section{Introduction}
Recall that the HRS-tilting
process constructs, from a torsion class $\mathcal{T}$ in $\Rmod$,
an abelian category $\mathcal{A}_{\mathcal{T}}$ such that the bounded
derived categories of $\mathcal{A}_{\mathcal{T}}$ and $\Rmod$
are equivalent under certain conditions (see \cite{zbMATH00867897}). 
There is a plethora of articles with the goal to establish conditions on the class $\mathcal{T}$ such that $\mathcal{A}_\mathcal{T}$ satisfies some of the conditions introduced by Grothendieck in the hierarchy of abelian categories (see \cite{CGM, zbMATH00867897, MT, zbMATH06669084, PS2, PS3, PS4, PS5, bravo2019t, PV}). Silting torsion classes are exactly those torsion classes $\mathcal{T}$
on which the abelian category $\mathcal{A}_{\mathcal{T}}$ has a projective
generator. 
For more details, the reader is referred to \cite{nicolas2019silting,zbMATH01716644,siltingmodules}
or \cite[Proposition 4.9 and Theorem 5.9]{angeleri2019siltingobjects}. 

In fact, a torsion class $\mathcal{T}$ is silting if there exists
a module $S$ and a projective presentation $P_{-1}\overset{\sigma}{\rightarrow}P_{0}\twoheadrightarrow S$
such that $\Gen(S)=\mathcal{T}$ and the class 
\[
\mathcal{D}_{\sigma}:=\{X\in\Rmod\,|\:\Hom(\sigma,X)\text{ is surjective}\}
\]
coincides with $\mathcal{T}$ (see Section \ref{sec:Preliminaries}
for an explanation of the notation). 
In this case, $S$ is called a silting
module \cite[Definition 3.7]{siltingmodules}. In \cite[Proposition 3.10]{siltingmodules},
it is shown that all silting modules are quasi-tilting finendo, i.e.
the class $\Gen(S)$ is closed by products and contained in $S^{\bot_{1}}$.
However, not all quasi-tilting finendo modules are silting. An example
of this can be seen in \cite[Example 5.4]{silting2017}, where the
authors show a situation where $R/I$ is not silting for an idempotent
two-sided ideal $I$. 

Note that, when one has an idempotent two-sided ideal $I$, then $\mathcal{T}:=\Gen(R/I)$
is a torsion class closed by products and subobjects. That is, $\mathcal{T}$
is a torsion torsion-free class (TTF class for short). Conversely,
every TTF class is of the form $\Gen(R/I)$ with $I$ an idempotent
two-sided ideal. Moreover, in this situation $R/I$ is always a finendo
quasi-tilting module. Therefore, the following question naturally
arises: when is the module $R/I$ a silting module? Our main result
gives a sufficient condition on which $R/I$ is a silting module.

\begin{thmA}\label{thm:main}Let $I$ be an idempotent two-sided
ideal of $R$. If $I$ is the trace of a projective left (resp. right) module,
then $R/I$ is a silting left (resp. right) module. 

\end{thmA}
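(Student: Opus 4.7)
The plan is to build an explicit projective presentation of $R/I$ from the projective module witnessing the trace and then check that the associated class $\mathcal{D}_\sigma$ equals $\Gen(R/I)$. Let $P$ be a projective left $R$-module with $\Tr(P)=I$, set $\Lambda:=\Hom(P,R)$, and define
\[
\sigma\colon P^{(\Lambda)}\longrightarrow R,\qquad (p_\phi)_{\phi\in\Lambda}\longmapsto \sum_{\phi}\phi(p_\phi).
\]
By the definition of the trace, $\im(\sigma)=I$, so $P^{(\Lambda)}\overset{\sigma}{\to} R\twoheadrightarrow R/I$ is a projective presentation of $R/I$. Since $I$ is a two-sided ideal, it is standard that $\Gen(R/I)=\{X\in\Rmod : IX=0\}$.

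First I would establish the auxiliary equivalence: for every $X\in\Rmod$, $\Hom(P,X)=0$ if and only if $IX=0$. The direction $IX=0\Rightarrow\Hom(P,X)=0$ follows from the dual basis lemma, since any $g\in\Hom(P,X)$ satisfies $g(p)=\sum_i\phi_i(p)g(p_i)\in IX$. For the reverse direction, note that for each $\phi\in\Lambda$ and $x\in X$ the map $p\mapsto\phi(p)x$ is $R$-linear; if $\Hom(P,X)=0$ then $\phi(P)\cdot X=0$ for every $\phi$, and summing over $\phi$ yields $IX=0$. Combined with the description of $\Gen(R/I)$, this identifies $\Gen(R/I)=\{X:\Hom(P,X)=0\}$.

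With this equivalence in hand, I would verify the two inclusions defining $\mathcal{D}_\sigma=\Gen(R/I)$. The inclusion $\Gen(R/I)\subseteq\mathcal{D}_\sigma$ is immediate: if $\Hom(P,X)=0$ then $\Hom(P^{(\Lambda)},X)=0$ and $\Hom(\sigma,X)$ is trivially surjective. For the converse, assume $X\in\mathcal{D}_\sigma$ and fix any $g\in\Hom(P,X)$. The constant family $(g)_{\phi\in\Lambda}\in\Hom(P^{(\Lambda)},X)=\Hom(P,X)^{\Lambda}$ must lift through $\sigma$ to some element $x\in\Hom(R,X)\cong X$, i.e.\ $g(p)=\phi(p)\,x$ for every $\phi\in\Lambda$ and $p\in P$. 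Specializing $\phi=0\in\Lambda$ forces $g=0$, so $\Hom(P,X)=0$ and $X\in\Gen(R/I)$.

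Nothing in the argument is analytically delicate; the main conceptual point is the backward inclusion, which exploits the presence of the zero morphism inside $\Lambda$ to produce an annihilator of any prescribed $g$. The right-module case is handled symmetrically by running the same construction with sides reversed.
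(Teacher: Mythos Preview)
Your proof is correct and follows the same overall strategy as the paper: build an explicit projective presentation of $R/I$ from $P$ and verify directly that the associated class $\mathcal{D}$ equals $\mathcal{T}_I=P^{\perp_0}$. The paper, however, does not work with $\sigma$ alone; it enlarges the presentation to
\[
\rho=\left[\begin{smallmatrix}\sigma & 0 & 0\\ 0 & 0 & 1\end{smallmatrix}\right]\colon P^{(\Lambda)}\oplus P^{(\Lambda)}\oplus R\longrightarrow R\oplus R,
\]
so that the extra middle summand (mapped to zero) provides a ``zero slot'' which forces any $b\in\Hom(P^{(\Lambda)},M)$ placed there to vanish. Your observation that the zero morphism already belongs to $\Lambda=P^{*}$ makes this enlargement unnecessary: specializing the constant family $(g)_{\phi\in\Lambda}$ at the coordinate $\phi=0$ plays exactly the same role. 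Thus your argument is a mild streamlining of the paper's, and in fact yields the slightly sharper conclusion that $R/I$ is silting with respect to $\sigma$ itself rather than the padded map $\rho$.
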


%

We will also show that the converse of the previous result holds true
under some additional conditions. Namely:

\begin{thmB}\label{prop:main}Let $I$ be an idempotent two-sided
ideal of $R$. If $R/I$ admits a projective cover and $R/I$ is silting,
then $I$ is the trace of a projective module. 

\end{thmB}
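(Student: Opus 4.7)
My plan is to construct a projective $R$-module $P$ with $\Tr(P)=I$ by extracting a suitable summand from a silting presentation of $R/I$ and combining it with a piece supplied by the projective cover. The candidate will be $P=P_{-1}''\oplus Re'$, where $Re$ is the projective cover of $R/I$, $e'=1-e$, and $P_{-1}''$ is an appropriate summand of the domain of $\sigma$; the equality $\Tr(P)=I$ will split along the decomposition $I=Ie+Ie'$.

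I would first analyze the projective cover $\pi^{\min}:P_0^{\min}\twoheadrightarrow R/I$. Lifting $R\twoheadrightarrow R/I$ through $\pi^{\min}$ by projectivity of $R$ yields a morphism $R\to P_0^{\min}$, which is surjective because $\pi^{\min}$ is essential; hence $P_0^{\min}$ is a direct summand of $R$, so $P_0^{\min}=Re$ for some idempotent $e$. Well-definedness of $\pi^{\min}(re)=r+I$ then forces $1-e\in I$, and one checks $\Ker\pi^{\min}=Ie$. Now fix any silting presentation $\sigma:P_{-1}\to P_0$ of $R/I$: since $P_0\twoheadrightarrow R/I$ factors through $\pi^{\min}$ via a split epi $P_0\twoheadrightarrow P_0^{\min}$, we obtain $P_0=P_0^{\min}\oplus Q$ with $Q$ projective and $\Ker(P_0\to R/I)=Ie\oplus Q$. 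Writing $\sigma=(\sigma_1,\sigma_2)$ and splitting the resulting surjection $\sigma_2:P_{-1}\twoheadrightarrow Q$, one gets $P_{-1}=Q'\oplus P_{-1}''$ with $P_{-1}''=\Ker\sigma_2$; matching $\mathrm{im}(\sigma)=Ie\oplus Q$ then yields that $\beta:=\sigma_1|_{P_{-1}''}:P_{-1}''\to Re$ is surjective onto $Ie$.

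The silting hypothesis enters by applying $\mathcal{D}_\sigma=\Gen(R/I)$ to $X=R/I$. Since $\sigma$ lands inside $Ie\oplus Q$ and any $\phi:Re\to R/I$ annihilates $Ie$ (because $I\cdot R/I=0$), one computes that the image of $\sigma^*:\Hom(P_0,R/I)\to\Hom(P_{-1},R/I)$ sits inside $\Hom(Q',R/I)\oplus 0$ under the decomposition of $P_{-1}$. Surjectivity of $\sigma^*$ therefore forces $\Hom(P_{-1}'',R/I)=0$, and since $P_{-1}''/IP_{-1}''$ is a projective $R/I$-module, this gives $IP_{-1}''=P_{-1}''$. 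In particular every $f:P_{-1}''\to R$ satisfies $f(P_{-1}'')=If(P_{-1}'')\subseteq I$, so $\Tr(P_{-1}'')\subseteq I$.

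Finally I set $P=P_{-1}''\oplus Re'$, which is projective. Since $e'\in I$, $\Tr(Re')=Re'R\subseteq I$, so $\Tr(P)\subseteq I$. Conversely, $\beta$ composed with $Re\hookrightarrow R$ gives $Ie\subseteq\Tr(P_{-1}'')$, and $Re'\subseteq\Tr(Re')$ trivially; hence $\Tr(P)\supseteq Ie+Re'\supseteq Ie+Ie'=I(e+e')=I$, so $\Tr(P)=I$. The main obstacle is that silting does not imply $IP_{-1}=P_{-1}$ for the full $P_{-1}$ when $P_0$ properly exceeds the projective cover; the correct move is to isolate the summand $P_{-1}''$ on which this equality does hold, and to repair the missing $Ie'$ piece by adjoining $Re'$, which is available precisely because $1-e\in I$.
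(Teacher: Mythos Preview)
Your proof is correct and follows essentially the same blueprint as the paper's: decompose $P_0$ using the projective cover (the paper's Lemma~\ref{lem:cubierta proyectiva}), split off the corresponding summand of $P_{-1}$, show that the remaining summand lies in $\mathcal{C}_I$, verify it surjects onto the kernel of the cover, and adjoin the complementary summand of $R$ to recover all of $I$. Your $P_{-1}''$ is exactly the paper's $\hat{Q}$, and your $Re'$ is the paper's $P$ coming from $I\cong K\oplus P$.

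The one noteworthy difference is how you establish $P_{-1}''\in\mathcal{C}_I$. The paper first proves the general characterization in Proposition~\ref{prop:1} (silting $\Leftrightarrow$ conditions (a),(b),(c)) and then, in Proposition~\ref{prop:a y b vs traza}, invokes condition~(b) --- that $(1{:}\mathfrak{c})\pi_\sigma$ is an isomorphism --- to force $(1{:}\mathfrak{c})\hat{Q}=0$. You bypass this machinery entirely: you apply the defining equality $\mathcal{D}_\sigma=\Gen(R/I)$ only at the single test object $R/I$, observe that the image of $\Hom(\sigma,R/I)$ misses $\Hom(P_{-1}'',R/I)$ because $\sigma(P_{-1}'')\subseteq Ie$ is killed by any map to $R/I$, and conclude $P_{-1}''/IP_{-1}''=0$ from projectivity over $R/I$. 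This is a genuinely more elementary route; it avoids the torsion-radical bookkeeping and shows that the full strength of Proposition~\ref{prop:1} is not needed for Theorem~B. The paper's modular approach, on the other hand, isolates reusable conditions (a),(b) that also feed into other results (e.g.\ Proposition~\ref{prop:2} and Theorem~\ref{thm:cubierta vs silting}).
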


We organize the structure of the article as follows. In Section 2
we will see the necessary preliminaries. In particular, we will give
the definition and some properties of silting modules and TTF classes.
In Section 3 we will study the properties that a projective presentation
of $R/I$ must fulfill so that it is a silting module. Finally, in
Section 4 we will prove our main results. As an application, we will
see in Example \ref{exa:semiperfecto,local} that, if the ring $R$
is semiperfect and the Jacobson ideal $J$ is not zero and idempotent,
then $R/J$ is not silting. It is worth mentioning that rings of this
type can be found in \cite[Example 5.4]{silting2017}, \cite[Exercise 10.2]{Passman}, \cite[Example 4.3]{bravo2019t}.

\subsection*{Acknowledgments}

Part of this work was done during in a research stay of the first  author
at the Universidad Austral de Chile in 2024. The first author
would like to thank to the Instituto de Ciencias F\'isicas y Matem\'aticas
of the Universidad Austral de Chile for their hospitality and support. 

\section{\label{sec:Preliminaries}Preliminaries}
In this section, we  review the foundational concepts and notations essential for the development of our results. These preliminaries serve as the groundwork for understanding the structure of silting modules and torsion torsion-free classes, as well as other relevant concepts.
\subsection{Modules and maps.} 
Throughout this paper, we will let $R$ be an associative ring
with unit. We will denote the category of left $R$-modules as
$\Rmod$. 
However, all results in this work also hold true for right
$R$-modules. Recall that $\Rmod$ is an abelian category. In particular,
$\Rmod$ is an additive category that admits finite biproducts.
This allows us to visualize morphisms between finite coproducts of
modules as matrices as follows. 
Given two families of modules $\{A_{i}\}_{i=1}^{n}$
and $\{B_{i}\}_{i=1}^{m}$ consider the canonical inclusions $\mu_{i}^{A}:A_{i}\rightarrow\bigoplus_{k=1}^{n}A_{k}$
and the natural projections $\pi_{j}^{B}:\bigoplus_{k=1}^{m}B_{k}\rightarrow B_{j}$.
We can express a morphism $f:\bigoplus_{i=1}^{n}A_{i}\rightarrow\bigoplus_{i=1}^{m}B_{i}$
as the matrix 
\[
f=\left[\begin{smallmatrix}f_{11} & f_{12} & \cdots & f_{1n}\\
f_{21} & f_{22} & \cdots & f_{2n}\\
\vdots & \vdots & \ddots & \vdots\\
f_{m1} & f_{m2} & \cdots & f_{mn}
\end{smallmatrix}\right]_{m\times n}
\]
where $f_{ji}:=\pi_{j}^{B}\circ f\circ\mu_{i}^{A}$. Note that, by
using this visualization, the composition of morphisms can be seen
as the usual product of matrices. 
\subsection{$\text{Hom}$, $\text{Ext}$ and Orthogonal classes.} 
Recall that, for $A,B\in\Rmod$, then $\Ext^{1}(A,B)$ denotes the collection
of equivalence classes of the short exact sequences of the form $\suc[B][E][A]$.
Note that, by using the Baer sum, we have a functor $\Ext^{1}(-,-):\Rmod^{op}\times\Rmod\rightarrow\Mod(\mathbb{Z})$.
The reader is referred to \cite[Chapter VII]{mitchell} or \cite[Section 2.1]{K} for more details. 

We will be considering the following classes of modules associated
to a class of modules $\mathcal{X}$ and a module $S$: 
\begin{align*}
\mathcal{X}^{\bot_{0}} & :=\{M\in\Rmod\,|\:\Hom(X,M)=0\text{, for all }X\in\mathcal{X}\}\\
^{\bot_{0}}\mathcal{X} & :=\{M\in\Rmod\,|\:\Hom(M,X)=0\text{, for all }X\in\mathcal{X}\}\\
\mathcal{X}^{\bot_{1}} & :=\{M\in\Rmod\,|\:\Ext^{1}(X,M)=0\text{, for all }X\in\mathcal{X}\}\\
^{\bot_{1}}\mathcal{X} & :=\{M\in\Rmod\,|\:\Ext^{1}(M,X)=0\text{, for all }X\in\mathcal{X}\}\\
\Gen(S) & :=\{M\in\Rmod\,|\:\text{there is an epimorphism }S^{(\alpha)}\rightarrow M\text{, for some set }\alpha\}.
\end{align*}
When the class $\mathcal{X}$ is a singleton set $\mathcal{X}=\{M\}$,
we will write $M^{\bot_{0}}$ (resp. $^{\bot_{0}}M$, $M^{\bot_{1}},$$^{\bot_{1}}M$)
instead of $\mathcal{X}^{\bot_{0}}$ (resp. ${}^{\bot_{0}}\mathcal{X}$,
$\mathcal{X}^{\bot_{1}}$, $^{\bot_{1}}\mathcal{X}$). 

\subsection{Projective covers}

Let $M\in\Rmod$. A \emph{projective cover} of $M$ is an epimorphism
$\pi:P\rightarrow M$ with $P\in\Proj(R)$ such that, any morphism
$f:P\rightarrow P$ such that $\pi\circ f=\pi$ is an isomorphism
(see  \cite[Definition V.5.5]{GT}). 
It seems to be folklore that the following result holds.
\begin{lem} 
\label{lem:cubierta proyectiva}Let $p:Q\rightarrow M$ be a projective
cover and $p':Q'\rightarrow M$ be an epimorphism with $Q'\in\Proj(R)$.
Then, there is $P\in\Proj(R)$ such that the following statements
hold true:
\begin{enumerate}
\item $Q'\cong Q\oplus P$, 
\item $\Ker(p')\cong\Ker(p)\oplus P$, 
\item and there is a commutative diagram with exact rows as the one below. 
\end{enumerate}
\end{lem}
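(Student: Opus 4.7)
The plan is to exploit projectivity twice and then invoke the defining property of the projective cover $p$. First, since $Q'$ is projective and $p$ is an epimorphism, I would lift $p'$ through $p$ to obtain a morphism $g : Q' \to Q$ with $p \circ g = p'$. Symmetrically, since $Q$ is projective and $p'$ is an epimorphism, I would lift $p$ through $p'$ to obtain $h : Q \to Q'$ with $p' \circ h = p$. Combining these gives $p \circ (g \circ h) = p$, so by the projective cover property the endomorphism $g \circ h$ of $Q$ is an automorphism. In particular, $h$ is a split monomorphism with retraction $r := (g \circ h)^{-1} \circ g$.

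Setting $P := \Ker(r)$, the splitting lemma gives $Q' = h(Q) \oplus P \cong Q \oplus P$, which is (a); as a direct summand of $Q'$, the module $P$ is projective. A small but crucial remark is that, since $(g \circ h)^{-1}$ is invertible, $P = \Ker(r) = \Ker(g)$, and because $p' = p \circ g$ this yields the inclusion $P \subseteq \Ker(p')$. Next, consider the isomorphism $\phi : Q \oplus P \to Q'$ given by $(q, y) \mapsto h(q) + y$. Under this identification, $p' \circ \phi$ is the map $(q, y) \mapsto p(q) + p'(y) = p(q)$, that is, the composition of the canonical projection $Q \oplus P \to Q$ with $p$. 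Restricting to kernels gives $\Ker(p') \cong \Ker(p) \oplus P$, which proves (b); pairing this with $\phi$ and the identity on $M$ then yields the desired commutative diagram with exact rows in (c).

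The main obstacle is making the right choice of $P$: taking $P = \Ker(r)$ and then recognizing it as $\Ker(g)$ is what ensures $P$ lies inside $\Ker(p')$ and makes the diagram commute on the nose. Once this identification is in hand, the remainder reduces to the two lifting diagrams and the defining property of a projective cover.
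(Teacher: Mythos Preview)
Your argument is correct. Note, however, that the paper does not actually supply a proof of this lemma: it is introduced with the remark ``It seems to be folklore that the following result holds'' and is left unproved. Your proof is the standard one---lift in both directions using projectivity, apply the defining property of a projective cover to conclude that $g\circ h$ is an automorphism, and then split off the complement $P=\Ker(r)=\Ker(g)$---and it is complete. The key observation you isolate, namely that $P=\Ker(g)\subseteq\Ker(p')$, is exactly what is needed to see that under the isomorphism $\phi$ the epimorphism $p'$ becomes $\left[\begin{smallmatrix}p & 0\end{smallmatrix}\right]$, which yields both (b) and the commutative diagram in (c).
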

\[
\begin{tikzpicture}[-,>=to,shorten >=1pt,auto,node distance=2cm,main node/.style=,x=2cm,y=-2cm]

\node (1) at (1,0) {$K\oplus P$};
\node (2) at (2,0) {$Q \oplus P$};
\node (3) at (3,0) {$M$};

\node (1') at (1,1) {$K$};
\node (2') at (2,1) {$Q$};
\node (3') at (3,1) {$M$};

\draw[right hook -> , thin]  (1)  to  node  {$\left[\begin{smallmatrix}k & 0\\0 & 1\end{smallmatrix}\right]$} (2);
\draw[right hook -> , thin, below]  (1')  to  node  {$ k$} (2');

\draw[->> , thin]  (2)  to  node  {$\left[\begin{smallmatrix}p & 0\end{smallmatrix}\right]$} (3);
\draw[->> , thin, below]  (2')  to  node  {$p$} (3');

\draw[ ->> , thin]  (1)  to  node  {$\left[\begin{smallmatrix}1 & 0\end{smallmatrix}\right]$} (1');
\draw[->> , thin]  (2)  to  node  {$\left[\begin{smallmatrix}1 & 0\end{smallmatrix}\right]$} (2');
\draw[- , double]  (3)  to  node  {$$} (3');

\end{tikzpicture}
\]%

\subsection{Torsion pairs}

Let $\mathfrak{x}=(\mathcal{X},\mathcal{Y})$ be a pair of classes
of modules. We say that $\mathfrak{x}$ is a \emph{torsion pair} in
$\Rmod$ if $\mathcal{X}={}^{\bot_{0}}\mathcal{Y}$ and $\mathcal{Y}=\mathcal{X}^{\bot_{0}}$.
In such case, $\mathcal{X}$ is called a torsion class and $\mathcal{Y}$
a torsion-free class. 

If $\mathfrak{x}$ is a torsion pair, then it can be seen that, for
all $M\in\Rmod$, there is an exact sequence $\suc[\mathfrak{x}M][M][(1:\mathfrak{x})M][\epsilon_{M}][\rho_{M}]$
with $\mathfrak{x}M\in\mathcal{X}$ and $(1:\mathfrak{x})M\in\mathcal{Y}$.
Moreover, the induced functor $\mathfrak{x}:\Rmod\rightarrow\mathcal{X}$
is right adjoint to the inclusion functor $\mathcal{X}\rightarrow\Rmod$,
and the induced functor $(1:\mathfrak{x}):\Rmod\rightarrow\mathcal{Y}$
is left adjoint to the inclusion functor $\mathcal{Y}\rightarrow\Rmod$.
Note that the composition $\Rmod\overset{\mathfrak{x}}{\rightarrow}\mathcal{X}\hookrightarrow\Rmod$,
which we also denote as $\mathfrak{x}$, is called the \emph{torsion
radical}. Similarly, we also denote as $(1:\mathfrak{x})$ the composition
$\Rmod\overset{(1:\mathfrak{x})}{\rightarrow}\mathcal{F}\hookrightarrow\Rmod$,
which is called the \emph{torsion coradical}. 

A class of modules is a torsion class if and only if
it is closed under extensions, quotients and coproducts; and 
it is a torsion-free class if and only if it is closed under extensions,
subobjects and products (see \cite[Chapter VI]{ringsofQuotients}
for more details). 

Lastly, observe that given a morphism in $\Rmod$ of the form 
\[
\left[\begin{smallmatrix}a & b\\
c & d
\end{smallmatrix}\right]:X\oplus Y\rightarrow V\oplus W,
\]
we have that 
\[
\mathfrak{x}\left[\begin{smallmatrix}a & b\\
c & d
\end{smallmatrix}\right]=\left[\begin{smallmatrix}\mathfrak{x}a & \mathfrak{x}b\\
\mathfrak{x}c & \mathfrak{x}d
\end{smallmatrix}\right]\text{ and that }(1:\mathfrak{x})\left[\begin{smallmatrix}a & b\\
c & d
\end{smallmatrix}\right]=\left[\begin{smallmatrix}(1:\mathfrak{x})a & (1:\mathfrak{x})b\\
(1:\mathfrak{x})c & (1:\mathfrak{x})d
\end{smallmatrix}\right].
\]
\subsection{Silting modules}

For a morphism $\alpha$ in $\Rmod$, define 
\[
\mathcal{D}_{\alpha}:=\{X\in\Rmod\,|\:\Hom(\alpha,X)\text{ is surjective}\}.
\]
Let $\sigma$ be a map of projective modules. Observe that $\mathcal{D}_{\sigma}\subseteq\Coker(\sigma){}^{\bot_{1}}$
and that $\mathcal{D}_{\sigma}$ is closed under quotients, extensions,
and products (see \cite[Lemma 3.6]{siltingmodules}). For $S:=\Coker(\sigma)$,
we say that $S$ is \emph{silting with respect to $\sigma$} if $\mathcal{D}_{\sigma}=\Gen(S)$.
In this case, observe that $\mathcal{D}_{\sigma}$ is also closed
under coproducts and therefore it is a torsion class in $\Rmod$. 

The following lemma will be necessary. 
\begin{lem}
\label{lemita d sigma}\cite[Lemma 2.1]{silting2017} Let $\sigma\in\Mor(\Proj(R))$
and $\sigma=\iota_{\sigma}\circ\pi_{\sigma}$ be the canonical factorization
through $\im(\sigma)$. Then, $\mathcal{D}_{\sigma}=\mathcal{D}_{\pi_{\sigma}}\cap(\Coker(\sigma)^{\bot_{1}})$. 
\end{lem}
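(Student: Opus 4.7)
The plan is to analyze $\Hom(\sigma,X)$ through the factorization $\sigma=\iota_{\sigma}\circ\pi_{\sigma}$, where, writing $\sigma:P_{-1}\to P_{0}$ and $K:=\im(\sigma)$, the map $\pi_{\sigma}:P_{-1}\twoheadrightarrow K$ is the canonical surjection and $\iota_{\sigma}:K\hookrightarrow P_{0}$ is the inclusion. Applying the contravariant functor $\Hom(-,X)$ gives
\[
\Hom(\sigma,X)=\Hom(\pi_{\sigma},X)\circ\Hom(\iota_{\sigma},X),
\]
so the goal reduces to showing that $\Hom(\sigma,X)$ is surjective iff both of its factors are. By definition, surjectivity of $\Hom(\pi_{\sigma},X)$ is the condition $X\in\mathcal{D}_{\pi_{\sigma}}$; the first intermediate claim is that surjectivity of $\Hom(\iota_{\sigma},X)$ is equivalent to $X\in\Coker(\sigma)^{\bot_{1}}$.

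To establish that intermediate claim, I apply $\Hom(-,X)$ to the short exact sequence $0\to K\to P_{0}\to\Coker(\sigma)\to 0$ and use $\Ext^{1}(P_{0},X)=0$, which holds because $P_{0}$ is projective. The long exact sequence collapses to
\[
\Hom(P_{0},X)\xrightarrow{\Hom(\iota_{\sigma},X)}\Hom(K,X)\to\Ext^{1}(\Coker(\sigma),X)\to 0,
\]
so $\Hom(\iota_{\sigma},X)$ is surjective precisely when $\Ext^{1}(\Coker(\sigma),X)=0$.

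The inclusion $\mathcal{D}_{\pi_{\sigma}}\cap\Coker(\sigma)^{\bot_{1}}\subseteq\mathcal{D}_{\sigma}$ is then immediate, since a composition of two surjections is surjective. For the converse, assume $X\in\mathcal{D}_{\sigma}$. That $X\in\mathcal{D}_{\pi_{\sigma}}$ is automatic from the factorization: any $\phi\in\Hom(P_{-1},X)$ is of the form $f\sigma=(f\iota_{\sigma})\pi_{\sigma}$ for some $f$, and therefore lies in the image of $\Hom(\pi_{\sigma},X)$.

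The only nonformal step — and the one I would flag as the main obstacle — is deducing $X\in\Coker(\sigma)^{\bot_{1}}$, equivalently that $\Hom(\iota_{\sigma},X)$ is surjective. Given $h:K\to X$, I precompose with $\pi_{\sigma}$ to obtain $h\pi_{\sigma}:P_{-1}\to X$; by hypothesis there exists $f:P_{0}\to X$ with $f\sigma=h\pi_{\sigma}$, i.e.\ $(f\iota_{\sigma})\pi_{\sigma}=h\pi_{\sigma}$. Right-cancelling the epimorphism $\pi_{\sigma}$ yields $f\iota_{\sigma}=h$, placing $h$ in $\im\Hom(\iota_{\sigma},X)$. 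The crux is this epi-cancellation: it is the surjectivity of $\pi_{\sigma}$ onto $\im(\sigma)$ that transfers the lifting property along $\sigma$ into a lifting along $\iota_{\sigma}$, and so ties $\mathcal{D}_{\sigma}$ to the Ext-vanishing condition on $\Coker(\sigma)$.
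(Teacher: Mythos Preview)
Your proof is correct. Note, however, that the paper does not supply its own proof of this lemma: it is quoted verbatim from \cite[Lemma 2.1]{silting2017} and used as a black box, so there is no in-paper argument to compare against. Your argument is the standard one: factor $\Hom(\sigma,X)$ through $\Hom(\iota_{\sigma},X)$ and $\Hom(\pi_{\sigma},X)$, identify surjectivity of the first factor with $\Ext^{1}(\Coker(\sigma),X)=0$ via the long exact sequence (using projectivity of $P_{0}$), and handle the nontrivial direction by precomposing an arbitrary $h:\im(\sigma)\to X$ with the epimorphism $\pi_{\sigma}$ and then cancelling it. Every step is sound; in particular the epi-cancellation you flag is legitimate since $\pi_{\sigma}$ is surjective by construction.
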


\subsection{TTF triples}

We recall that a \emph{torsion torsion-free triple} (\emph{TTF} triple for
short) in $\Rmod$ is a triple of classes of modules $\triple$
such that $\c=(\mathcal{C},\mathcal{T})$ and $\t=(\mathcal{\mathcal{T}},\mathcal{F})$
are torsion pairs. It is a known fact that TTF triples in $\Rmod$
are in bijection with idempotent two-sided ideals via the maps $\triple\mapsto\c(R)$
and \renewcommandx\triple[3][usedefault, addprefix=\global, 1=\mathcal{C}_{I}, 2=\mathcal{T}_{I}, 3=\mathcal{F}_{I}]{(#1,#2,#3)}%
 $I\mapsto\triple$ where 
\begin{alignat*}{1}
\mathcal{C}_{I} & =\Gen(I)=\{M\in\Rmod\,|\:I\cdot M=M\},\\
\mathcal{T}_{I} & =\Gen(R/I)=\{M\in\Rmod\,|\:I\cdot M=0\},
\end{alignat*}
 and $\mathcal{F}_{I}=(R/I)^{\bot_{0}}$\textbf{ }(see \cite[Corollary 2.2]{jans1965some}).
From now on, $I$ will denote an idempotent two-sided ideal and $\triple$
will be the corresponding TTF triple, with $\c=(\mathcal{C}_{I},\mathcal{T}_{I})$
and $\t=(\mathcal{\mathcal{T}}_{I},\mathcal{F}_{I})$.

\subsection{Recollements}

A \emph{recollement} of $\Rmod$ by abelian categories $\mathcal{A}$
and $\mathcal{B}$ is a diagram of additive functors: 
\[
\begin{tikzpicture}[-,>=to,shorten >=1pt,auto,node distance=2cm,main node/.style=,x=2cm,y=-2cm]

\node (1) at (1,0) {$\mathcal{A}$};
\node (2) at (2,0) {$\Rmod$};
\node (3) at (3,0) {$\mathcal{B}$};

\draw[-> , thin]  (1)  to  node  {$i_*$} (2);
\draw[-> , thin]  (2)  to  node  {$j^*$} (3);

\draw[-> , thin,in=45,out=135, above]  (3)  to  node  {$j_!$} (2);
\draw[-> , thin,in=45,out=135, above]  (2)  to  node  {$i^*$} (1);

\draw[-> , thin,in=-45,out=-135, below]  (3)  to  node  {$j_*$} (2);
\draw[-> , thin,in=-45,out=-135, below]  (2)  to  node  {$i^!$} (1);

\end{tikzpicture}
\]%
such that the following statements hold true: 
\begin{enumerate}
\item the pairs $(i^{*},i_{*})$, $(i_{*},i^{!})$, $(j_{!},j^{*})$, and
$(j^{*},j_{*})$ are adjoint, 
\item the functors $i_{*}$, $j_{!}$, and $j_{*}$ are fully faithful,
and 
\item $\im(i_{*})=\Ker(j^{*})$. 
\end{enumerate}
It has been shown recentely that there is a bijection between equivalence classes
of recollements of $\Rmod$ and TTF triples in $\Rmod$ (see \cite{psaroudakis2014recollements}).
Specifically, a recollement as the one above induces the TTF triple
\[\triple[\mathcal{C}][\mathcal{T}][\mathcal{F}]=\triple[\Ker(i^{*})][\im(i_{*})][\Ker(i^{!})].\] Reciprocally,
a TTF triple $\triple$, with $\c=(\mathcal{C}_{I},\mathcal{T}_{I})$
and $\t=(\mathcal{T}_{I},\mathcal{F}_{I})$, induces the following recollement

\[
\begin{tikzpicture}[-,>=to,shorten >=1pt,auto,node distance=2cm,main node/.style=,x=2cm,y=-2cm]

\node (1) at (1,0) {$\mathcal{T} _I$};
\node (2) at (2,0) {$ \Rmod$};
\node (3) at (3,0) {$ \Mod (R) / \mathcal{T} _I$};

\draw[-> , thin]  (1)  to  node  {$i_*$} (2);
\draw[-> , thin]  (2)  to  node  {$j^*$} (3);

\draw[-> , thin,in=45,out=135, above]  (3)  to  node  {$j_!$} (2);
\draw[-> , thin,in=45,out=135, above]  (2)  to  node  {$i^*$} (1);

\draw[-> , thin,in=-45,out=-135, below]  (3)  to  node  {$j_*$} (2);
\draw[-> , thin,in=-45,out=-135, below]  (2)  to  node  {$i^!$} (1);

\end{tikzpicture}
\]%
where $i_{*}$ is the inclusion, $i_{*}\circ i^{*}=(1:\c)$, and $i_{*}\circ i^{!}=\t$. 

Finally, we deduce the following properties from \cite{psaroudakis2014recollements}.

\begin{prop}
\label{prop:recollement}Consider a recollement in $\Rmod$ as the
one above. Then, the following statements hold true. 
\begin{enumerate}
\item Every $M\in\Rmod$ admits long exact sequences of the form 
\begin{alignat*}{1}
\mathbf{(A)}\qquad & \suc[i_{*}T][j_{!}j^{*}M\rightarrow M][i_{*}i^{*}M]\text{ and }\\
\mathbf{(B)}\qquad & \suc[i_{*}i^{!}M][M\rightarrow j_{*}j^{*}M][i_{*}T']
\end{alignat*}
for some $T,T'\in\mathcal{T}_{I}$.
\item Every $C\in\mathcal{C}_{I}$ admits a short exact sequence $\suc[T][j_{!}j^{*}C][C]$
with $T\in\mathcal{T}_{I}$. 
\item Every $F\in\mathcal{F}_{I}$ admits a short exact sequence $\suc[F][j_{*}j^{*}F][T]$
with $T\in\mathcal{T}_{I}$. 
\item $\im(j_{!})=\mathcal{C}_{I}\cap{}^{\bot_{_{1}}}\mathcal{T}_{I}$. 
\item $\im(j_{*})=\mathcal{F}_{I}\cap\mathcal{T}_{I}^{\bot_{1}}$. 
\end{enumerate}
\end{prop}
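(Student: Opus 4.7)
The plan is to derive everything from the standard machinery of recollements, using only the defining adjunctions and the identity $\im(i_{*}) = \Ker(j^{*})$. First I would establish (a): for any $M \in \Rmod$, take the counit $\varepsilon_{M} : j_{!}j^{*}M \to M$ of the adjunction $(j_{!}, j^{*})$. Since $j^{*}$ is exact (it has both a left and a right adjoint), applying $j^{*}$ to the kernel and cokernel of $\varepsilon_{M}$ together with the isomorphism $j^{*}j_{!} \cong \mathrm{id}$ shows that both $\Ker(\varepsilon_{M})$ and $\Coker(\varepsilon_{M})$ lie in $\Ker(j^{*}) = \im(i_{*}) = \mathcal{T}_{I}$. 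Identifying $\Coker(\varepsilon_{M}) \cong i_{*}i^{*}M$ via the universal property $i_{*}i^{*} = (1:\c)$ imposed by the recollement yields the four-term exact sequence $(A)$. Sequence $(B)$ follows dually from the unit $M \to j_{*}j^{*}M$.

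From $(A)$, assertion (b) is immediate: for $C \in \mathcal{C}_{I}$ one has $(1:\c)C = 0$ and hence $i_{*}i^{*}C = 0$, so the last term of $(A)$ vanishes. Similarly, (c) follows from $(B)$ after noting that $F \in \mathcal{F}_{I}$ forces $i_{*}i^{!}F = \t F = 0$.

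For (d), the inclusion $\im(j_{!}) \subseteq \mathcal{C}_{I}$ is the formal identity $i^{*}j_{!} = 0$, which follows from the adjunction $(i^{*}, i_{*})$ combined with $j^{*}i_{*} = 0$. To show $\im(j_{!}) \subseteq {}^{\bot_{1}}\mathcal{T}_{I}$, I would take an arbitrary extension $\suc[T][E][j_{!}X]$ with $T \in \mathcal{T}_{I}$, apply the exact functor $j^{*}$ to obtain $j^{*}E \cong X$, and then invoke naturality of $\varepsilon$ to check that $\varepsilon_{E} : j_{!}j^{*}E \cong j_{!}X \to E$ provides a section of $E \twoheadrightarrow j_{!}X$. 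For the reverse inclusion, given $M \in \mathcal{C}_{I} \cap {}^{\bot_{1}}\mathcal{T}_{I}$ I would feed $M$ into (b) to get a short exact sequence $\suc[i_{*}T][j_{!}j^{*}M][M]$; the hypothesis $M \in {}^{\bot_{1}}\mathcal{T}_{I}$ splits the sequence, and then applying $i^{*}$ to the splitting, using $i^{*}j_{!} = 0$ and $i^{*}M = 0$ (as $M \in \mathcal{C}_{I}$), forces $T = 0$, so $M \cong j_{!}j^{*}M \in \im(j_{!})$.

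Part (e) is formally dual, using (c) together with the identities $j^{*}j_{*} \cong \mathrm{id}$ and $i^{!}j_{*} = 0$; one applies the unit $M \to j_{*}j^{*}M$ instead of the counit $\varepsilon$. I expect the most delicate point to be the splitting step in (d) and its dual: one must trace through the naturality of the (co)unit carefully enough to confirm that the candidate map produced by $j_{!}j^{*}$ really does split the short exact sequence witnessing the $\Ext^{1}$ hypothesis.
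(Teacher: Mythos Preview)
Your proposal is correct; the paper itself simply cites \cite{psaroudakis2014recollements} (Proposition~2.8 for part~(a), Corollary~4.5 for parts~(d) and~(e)) and then derives (b) and (c) from (a) exactly as you do. What you have written is a self-contained unpacking of those citations using only the recollement adjunctions and the exactness of $j^{*}$, so the approaches coincide.
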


\begin{proof} For {(a)} see \cite[Proposition 2.8]{psaroudakis2014recollements}. 
Next, {(b)} follows from the exact
sequence (A) in (a) since $i_{*}i^{*}C=(1:\c)C=0$.
Now {(c)} follows similar arguments as {(b)}. Finally for {(d)} and {(e)}, see \cite[Corollary 4.5]{psaroudakis2014recollements}. 

\end{proof}

\section{Silting TTF triples}

Throughout this section we fix the following setting.
\begin{setting}\label{setting} Given  an idempotent two-sided ideal $I$, consider the following conditions:
\begin{enumerate}
    \item $\triple$ be the corresponding TTF triple, 
    \item $\c:=(\mathcal{C}_{I},\mathcal{T}_{I})$,
    \item $\t:=(\mathcal{\mathcal{T}}_{I},\mathcal{F}_{I})$, and
    \item $S$ be a module with a projective presentation $P_{-1}\overset{\sigma}{\rightarrow}P_{0}\twoheadrightarrow S$ such that $\Gen(S)=\mathcal{T}_{I}\subseteq S^{\bot_{1}}$ and $\sigma=\iota_{\sigma}\circ\pi_{\sigma}$, where $\pi_{\sigma}:P_{-1}\rightarrow\im(\sigma)$ and $\iota_{\sigma}:\im(\sigma)\rightarrow P_{0}$ are the canonical morphisms.
\end{enumerate}

\end{setting}
\begin{prop}
\label{prop:1} With the conditions described in Setting \ref{setting}, $S$ is silting with
respect to $\sigma$ 
if, and only if, the following statements hold
true.
\begin{enumerate}
\item \textup{$(1:\c)\sigma:(1:\c)P_{-1}\rightarrow(1:\c)P_{0}$} is a (split)
monomorphism.
\item $(1:\c)\pi_{\sigma}:(1:\c)P_{-1}\rightarrow(1:\c)\im\sigma$ is an
isomorphism.
\item If $\Hom(\pi_{\sigma},j_{*}D):\Hom(\im\sigma,j_{*}D)\rightarrow\Hom(P_{-1},j_{*}D)$
is an isomorphism, then $D=0$. 
\end{enumerate}
\end{prop}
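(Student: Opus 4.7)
The plan is to unpack the silting condition $\mathcal{D}_\sigma=\mathcal{T}_I$ through Lemma \ref{lemita d sigma}, which writes $\mathcal{D}_\sigma=\mathcal{D}_{\pi_\sigma}\cap S^{\bot_1}$, and to exploit both the torsion coradical $(1:\c)$ (left adjoint to the inclusion of $\mathcal{T}_I$) and the recollement attached to the TTF triple $\triple$ in Proposition \ref{prop:recollement}. For the forward direction, assuming $\mathcal{D}_\sigma=\mathcal{T}_I$, property (a) is extracted by testing the silting condition against the object $(1:\c)P_{-1}\in\mathcal{T}_I=\mathcal{D}_\sigma$: the unit $\eta:P_{-1}\to(1:\c)P_{-1}$ extends along $\sigma$ to some $f:P_0\to(1:\c)P_{-1}$, and the triangle identity $(1:\c)\eta=\mathrm{id}$ immediately gives $(1:\c)f\circ(1:\c)\sigma=\mathrm{id}$. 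Property (b) follows from (a) since the factorization $(1:\c)\sigma=(1:\c)\iota_\sigma\circ(1:\c)\pi_\sigma$ makes $(1:\c)\pi_\sigma$ split mono, while the left adjoint $(1:\c)$ preserves the epimorphism $\pi_\sigma$, forcing $(1:\c)\pi_\sigma$ to be an isomorphism. For (c), if $\Hom(\pi_\sigma,j_*D)$ is iso then $j_*D\in\mathcal{D}_{\pi_\sigma}$; Proposition \ref{prop:recollement}(e) places $j_*D$ in $\mathcal{F}_I\cap\mathcal{T}_I^{\bot_1}\subseteq S^{\bot_1}$ (since $S\in\mathcal{T}_I$), so Lemma \ref{lemita d sigma} lands $j_*D$ in $\mathcal{D}_\sigma=\mathcal{T}_I$; but $j_*D\in\mathcal{F}_I$ forces $j_*D\in\mathcal{T}_I\cap\mathcal{F}_I=0$, and $D=0$ by full faithfulness of $j_*$.

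For the converse, I plan to establish both inclusions of $\mathcal{D}_\sigma=\mathcal{T}_I$ separately. The inclusion $\mathcal{T}_I\subseteq\mathcal{D}_\sigma$ combines the hypothesis $\mathcal{T}_I\subseteq S^{\bot_1}$ with the adjunction isomorphism $\Hom(\pi_\sigma,X)\cong\Hom((1:\c)\pi_\sigma,X)$ for $X\in\mathcal{T}_I$, which is iso (hence surjective) by (b); Lemma \ref{lemita d sigma} then gives $\mathcal{T}_I\subseteq\mathcal{D}_{\pi_\sigma}\cap S^{\bot_1}=\mathcal{D}_\sigma$. The delicate inclusion $\mathcal{D}_\sigma\subseteq\mathcal{T}_I$ will be obtained by feeding $D=j^*X$ into (c) for each $X\in\mathcal{D}_\sigma$. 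Since $\pi_\sigma$ is epi, $\Hom(\pi_\sigma,j_*j^*X)$ is automatically monic, so what has to be checked is $j_*j^*X\in\mathcal{D}_{\pi_\sigma}$. From Proposition \ref{prop:recollement}(a) one extracts the exact sequence $0\to(1:\t)X\to j_*j^*X\to T'\to 0$ with $T'\in\mathcal{T}_I$; the quotient $(1:\t)X=X/\t X$ lies in $\mathcal{D}_{\pi_\sigma}$ by quotient-closure (since $X\in\mathcal{D}_\sigma\subseteq\mathcal{D}_{\pi_\sigma}$), $T'\in\mathcal{T}_I\subseteq\mathcal{D}_{\pi_\sigma}$ by the first inclusion, and the extension-closure of $\mathcal{D}_{\pi_\sigma}$ seals the claim. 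Condition (c) then forces $j^*X=0$, so $j_*j^*X=0$, and sequence $(\mathbf{B})$ collapses to $X=\t X\in\mathcal{T}_I$.

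The main obstacle I anticipate is precisely this last step: recognizing $j_*j^*X$ as the natural test object for activating condition (c), and assembling its $\mathcal{D}_{\pi_\sigma}$-membership from the two halves of the recollement exact sequence via the closure properties of $\mathcal{D}_{\pi_\sigma}$ recorded just before Lemma \ref{lemita d sigma}. Everything else, including (a), (b), and the inclusion $\mathcal{T}_I\subseteq\mathcal{D}_\sigma$, flows directly from the adjoint/triangle machinery for $(1:\c)$ together with the decomposition $\mathcal{D}_\sigma=\mathcal{D}_{\pi_\sigma}\cap S^{\bot_1}$.
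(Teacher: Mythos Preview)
Your forward direction and the inclusion $\mathcal{T}_I\subseteq\mathcal{D}_\sigma$ follow the paper's arguments closely; your derivation of (b) is in fact a mild streamlining, invoking preservation of epimorphisms by the left adjoint $(1:\c)$ where the paper runs an explicit Snake Lemma computation.

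There is, however, a gap in your argument for $\mathcal{D}_\sigma\subseteq\mathcal{T}_I$. You appeal to ``the extension-closure of $\mathcal{D}_{\pi_\sigma}$ recorded just before Lemma \ref{lemita d sigma}'', but that passage concerns $\mathcal{D}_\sigma$ for a map $\sigma$ \emph{between projective modules}; the codomain $\im(\sigma)$ of $\pi_\sigma$ need not be projective, and extension-closure of $\mathcal{D}_{\pi_\sigma}$ is neither asserted in the paper nor true in general (the standard proof would require lifting a map $\im(\sigma)\to Y$ along an epimorphism $E\twoheadrightarrow Y$, which needs projectivity of $\im(\sigma)$). The repair is immediate: carry out the extension argument inside $\mathcal{D}_\sigma$ instead. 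Since $(1:\t)X$ is a quotient of $X\in\mathcal{D}_\sigma$ and $T'\in\mathcal{T}_I\subseteq\mathcal{D}_\sigma$ by the first inclusion, extension-closure of $\mathcal{D}_\sigma$ yields $j_*j^*X\in\mathcal{D}_\sigma\subseteq\mathcal{D}_{\pi_\sigma}$, and the rest of your argument goes through unchanged.

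With this correction your route differs mildly from the paper's. You pass to the larger object $j_*j^*X$ and reach it via the short exact sequence $0\to(1:\t)X\to j_*j^*X\to T'\to 0$ from Proposition \ref{prop:recollement}(a) together with extension-closure of $\mathcal{D}_\sigma$. The paper instead stays with $W:=(1:\t)M$, shows directly that $W\in\mathcal{T}_I^{\bot_1}$ (using $W\in\mathcal{F}_I\cap S^{\bot_1}$ and $\mathcal{T}_I=\Gen(S)$), concludes $W\in\mathcal{F}_I\cap\mathcal{T}_I^{\bot_1}=\im(j_*)$, and applies (c) to $W$ itself. Both paths converge on the same invocation of (c), and neither is materially shorter.
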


\begin{proof}
From the exact sequence of functors $\c\overset{\epsilon}{\hookrightarrow}1\overset{\rho}{\twoheadrightarrow}(1:\c)$
and the Snake Lemma, we get the following commutative diagram with
exact rows and columns.

\[
\begin{tikzpicture}[-,>=to,shorten >=1pt,auto,node distance=2cm,main node/.style=,x=2cm,y=-2cm]

\node (1) at (1,0) {$\c P_{-1}$};
\node (2) at (2,0) {$ P_{-1}$};
\node (3) at (3,0) {$(1:\c) P_{-1}$};

\node (1') at (1,1) {$\c P_{0}$};
\node (2') at (2,1) {$ P_{0}$};
\node (3') at (3,1) {$(1:\c) P_{0}$};

\node (1'') at (1,2) {$$};
\node (2'') at (2,2) {$S$};
\node (3'') at (3,2) {$S$};

\draw[right hook -> , thin]  (1)  to  node  {$\epsilon _{P_{-1}}$} (2);
\draw[right hook -> , thin]  (1')  to  node  {$\epsilon _{P_0} $} (2');

\draw[->> , thin]  (2)  to  node  {$\rho_{P_{-1}}$} (3);
\draw[->> , thin]  (2')  to  node  {$\rho_{P_0}$} (3');
\draw[- , double]  (2'')  to  node  {$$} (3'');

\draw[ ->> , thin]  (1)  to  node  {$\c \sigma$} (1');
\draw[-> , thin]  (2)  to  node  {$\sigma$} (2');
\draw[-> , thin]  (3)  to  node  {$(1:\c )\sigma$} (3');

\draw[->> , thin]  (2')  to  node  {$c$} (2'');
\draw[->> , thin]  (3')  to  node  {$(1:\c )c$} (3'');

\end{tikzpicture}
\]%
\noindent $(\implies)$. Assume that $S$ is silting with respect to $\sigma$. 
\begin{enumerate}
\item Using the fact that $(1:\c)P_{-1}\in\mathcal{T}_{I}=\Gen(S)=\mathcal{D}_{\sigma}$,
we get a morphism $\hat{\sigma}:P_{0}\rightarrow(1:\c)P_{-1}$ such
that $\hat{\sigma}\circ\sigma=\rho_{P_{-1}}$. Note that $\hat{\sigma}\circ\epsilon_{P_{0}}=0$.
Thus, there is a morphism $\alpha:(1:\c)P_{0}\rightarrow(1:\c)P_{-1}$
such that $\alpha\circ\rho_{P_{0}}=\hat{\sigma}$. Therefore, (a)
is satisfied because 
\[
\rho_{P_{-1}}=\hat{\sigma}\circ\sigma=\alpha\circ\rho_{_{P_{0}}}\circ\sigma=\alpha\circ(1:\c)\sigma\circ\rho_{P_{-1}}
\]
and $\rho_{P_{-1}}$ is an epimorphism. 
\item Consider the exact sequence $\suc[\Ker(\sigma)][P_{-1}][\im(\sigma)][k][\pi_{\sigma}]$
and the following commutative diagram with exact rows.

\[
\begin{tikzpicture}[-,>=to,shorten >=1pt,auto,node distance=2cm,main node/.style=,x=2cm,y=-2cm]

\node (1) at (1,0) {$\c P_{-1}$};
\node (2) at (2,0) {$ P_{-1}$};
\node (3) at (3,0) {$(1:\c) P_{-1}$};

\node (1') at (1,1) {$\c \im (\sigma)$};
\node (2') at (2,1) {$ \im (\sigma)$};
\node (3') at (3,1) {$(1:\c) \im (\sigma)$};

\draw[right hook -> , thin]  (1)  to  node  {$\epsilon _{P_{-1}}$} (2);
\draw[right hook -> , thin]  (1')  to  node  {$ u$} (2');

\draw[->> , thin]  (2)  to  node  {$\rho_{P_{-1}}$} (3);
\draw[->> , thin]  (2')  to  node  {$p$} (3');

\draw[ ->> , thin]  (1)  to  node  {$\c \pi_{\sigma}$} (1');
\draw[->> , thin]  (2)  to  node  {$\pi_{\sigma}$} (2');
\draw[->> , thin]  (3)  to  node  {$(1:\c )\pi_{\sigma}$} (3');

\end{tikzpicture}
\]%
Here, $\c\pi_{\sigma}$ is an epimorphism because, by the Snake Lemma,
there is an epimorphism from $\Ker((1:\c)\pi_{\sigma})\in\mathcal{T}_{I}$
into $\Coker(\c\pi_{\sigma})\in\mathcal{C}_{I}$, and hence $\Coker(\c\pi_{\sigma})\in\mathcal{C}_{I}\cap\mathcal{T}_{I}=0$.
Now, observe that $\Hom(\pi_{\sigma},(1:\c)P_{-1})$ is surjective
because $(1:\c)P_{-1}\in\mathcal{T}_{I}=\Gen(S)=\mathcal{D}_{\sigma}\subseteq\mathcal{D}_{\pi_{\sigma}}$
by Lemma \ref{lemita d sigma}. Hence, there is a morphism $\epsilon:\im(\sigma)\rightarrow(1:\c)P_{-1}$
such that $\epsilon\circ\pi_{\sigma}=\rho_{P_{-1}}$. Note that $\epsilon\circ u=0$.
Hence, there is a morphism $\epsilon':(1:\c)\im(\sigma)\rightarrow(1:\c)P_{-1}$
such that $\epsilon'\circ p=\epsilon$. Lastly, note that $\epsilon'\circ(1:\c)\pi_{\sigma}=1$
because
\[
\rho_{P_{-1}}=\epsilon\circ\pi_{\sigma}=\epsilon'\circ p\circ\pi_{\sigma}=\epsilon'\circ(1:\c)\pi_{\sigma}\circ\rho_{P_{-1}}.
\]
 Therefore, $(1:\c)\pi_{\sigma}$ is an isomorphism.
\item Consider $D\in\Rmod/\mathcal{T}_{I}$ such that $\Hom(\pi_{\sigma},j_{*}D)$
is an isomorphism. By definition, we have that $j_{*}D\in\mathcal{D}_{\pi_{\sigma}}$.
Hence, $j_{*}D\in\mathcal{D}_{\sigma}=\mathcal{D}_{\pi_{\sigma}}\cap S^{\bot_{1}}=\mathcal{T}_{I}$
since $j_{*}D\in\mathcal{T}_{I}^{\bot_{1}}$ by Proposition \ref{prop:recollement}(e).
It follows that $j_{*}D\in\mathcal{T}_{I}\cap\mathcal{F}_{I}=0$.
We deduce that $D=0$ since $j_{*}$ is faithful. 
\end{enumerate}

\noindent $(\impliedby)$. Assume that (a), (b) and (c) are satisfied. It is
enough to prove that $\mathcal{D}_{\sigma}=\mathcal{T}_{I}$. Let
$T\in\mathcal{T}_{I}$ and consider a morphism $f:P_{-1}\rightarrow T$.
Observe that $\Hom(\rho_{P_{-1}},T)$ is an isomorphism since $\Ker(\rho_{P_{-1}})=\c P_{-1}\in\mathcal{C}_{I}$.
And thus, there is a morphism $\hat{f}:(1:\c)P_{-1}\rightarrow T$
such that $\hat{f}\circ\rho_{P_{-1}}=f$. Recall that, by (b), $(1:\c)\pi_{\sigma}$
is an isomorphism. Now, using the fact that $T\in S^{\bot_{1}}$,
we obtain a morphism $g:P_{0}\rightarrow T$ such that $g \circ \iota_{\sigma}=\hat{f} \circ ((1:\c)\pi_{\sigma})^{-1} \circ \rho_{\im (\sigma)}$.
From the following composition of morphisms, we deduce that $T\in\mathcal{D}_{\sigma}$
and, therefore, $\mathcal{T}_{I}\subseteq\mathcal{D}_{\sigma}.$
\begin{align*}
g\circ\sigma=g\circ\iota_{\sigma}\circ\pi_{\sigma} & =(\hat{f}\circ((1:\c)\pi_{\sigma})^{-1}\circ\rho_{\im(\sigma)})\circ\pi_{\sigma}\\
 & =\hat{f}\circ((1:\c)\pi_{\sigma})^{-1}\circ(1:\c)\pi_{\sigma}\circ\rho_{P_{-1}}\\
 & =\hat{f}\circ\rho_{P_{-1}}=f.
\end{align*}
On the other hand, let $M\in\mathcal{D}_{\sigma}$. Consider $W:=(1:\t)M\in\mathcal{F}$.
We claim that $W\in\mathcal{T}_{I}^{\bot_{1}}$. Indeed, since $\mathcal{T}_{I}=\Gen(S)$,
for every $T\in\mathcal{T}_{I}$ there is a short exact sequence $\suc[T'][S^{(\alpha)}][T]$
with $T'\in\mathcal{T}_{I}$. Then, since $W\in\mathcal{D}_{\sigma}$
and $\mathcal{D}_{\sigma}\subseteq S^{\bot_{1}}$, we have that the
following exact sequence 
\[
0=\Hom(T',W)\rightarrow\Ext^{1}(T,W)\rightarrow\Ext^{1}(S^{(\alpha)},W)=0.
\]
Therefore, $W\in\mathcal{F}\cap\mathcal{T}_{I}^{\bot_{1}}=\im(j_{*})$.
And thus, $W=j_{*}D$ for some $D\in\Rmod/\mathcal{T}_{I}$. Now,
since $W\in\mathcal{D}_{\sigma}\subseteq\mathcal{D}_{\pi_{\sigma}}$,
$\Hom(\pi_{\sigma},W)$ is surjective. And thus, by (c), we have that
$W=0$. This means that $M=\t M$. And hence, $\mathcal{D}_{\sigma}=\mathcal{T}_{I}$. 

\end{proof}
\begin{rem}
\label{rem:presentacion diagrama} With the consideration of Setting \ref{setting}. Statements (a) and (b) in Proposition \ref{prop:1} are satisfied if and only if we have the commutative diagram below with exact columns and rows, where $C,C'\in\mathcal{C}_{I}$ and $T,T'\in\mathcal{T}_{I}$:
\[
\begin{tikzpicture}[-,>=to,shorten >=1pt,auto,node distance=2cm,main node/.style=,x=2cm,y=-2cm]

\begin{scope}
   \node (3)  at (0,0)         {$T$};
   \node (3')  at (-30:2cm)     {$T'$};
   \node (3'')  at (-30:4cm)     {$S$};
\end{scope}

\begin{scope}[xshift=0cm,yshift=2cm]
   \node (2)  at (0,0)         {$\im (\sigma)$};
   \node (2')  at (-30:2cm)     {$ P_{0}$};
   \node (2'')  at (-30:4cm)     {$S$};
\end{scope}

\begin{scope}[xshift=0cm,yshift=4cm]
   \node (1)  at (0,0)         {$C$};
   \node (1')  at (-30:2cm)     {$C$};
   \node (1'')  at (-30:4cm)     {$$};
\end{scope}

\begin{scope}
   \node (a)  at  (210:4cm)     {$$};
   \node (a')  at (210:2cm)     {$T$};
   \node (a'')  at (0,0)        {$T$};
\end{scope}

\begin{scope}[xshift=0cm,yshift=2cm]
   \node (b)  at  (210:4cm)     {$K$};
   \node (b')  at (210:2cm)     {$P_{-1}$};
   \node (b'')  at (0,0)        {$\im (\sigma)$};
\end{scope}

\begin{scope}[xshift=0cm,yshift=4cm]
   \node (c)  at  (210:4cm)     {$K$};
   \node (c')  at (210:2cm)     {$C'$};
   \node (c'')  at (0,0)        {$C$};
\end{scope}

\draw[right hook -> , thin]  (1)  to  node  {$\mu$} (2);
\draw[right hook -> , thin]  (1')  to  node  {$\mu_0$} (2');

\draw[right hook -> , thin]  (b)  to  node  {$$} (b');
\draw[right hook -> , thin]  (c)  to  node  {$$} (c');

\draw[->> , thin]  (2)  to  node  {$q$} (3);
\draw[->> , thin]  (2')  to  node  {$q_0$} (3');
\draw[- , double]  (2'')  to  node  {$$} (3'');

\draw[->> , thin]  (c')  to  node  {$\c \pi_{\sigma}$} (c'');
\draw[->> , thin]  (b')  to  node  {$\pi_{\sigma}$} (b'');
\draw[- , double]  (a')  to  node  {$$} (a'');

\draw[- , double]  (1)  to  node  {$$} (1');
\draw[right hook -> , thin]  (2)  to  node  {$\iota_{\sigma}$} (2');
\draw[right hook -> , thin, below left]  (3)  to  node  {$(1:\c)\iota_{\sigma}$} (3');

\draw[- , double]  (c)  to  node  {$$} (b);
\draw[right hook -> , thin]  (c')  to  node  {$\mu _1 $} (b');
\draw[right hook -> , thin, below]  (c'')  to  node  {$$} (b'');

\draw[->> , thin]  (2')  to  node  {$$} (2'');
\draw[->> , thin, above]  (3')  to  node  {$\oplus$} (3'');
\draw[right hook -> , thin, above]  (3)  to  node  {$\oplus$} (3');

\draw[->> , thin]  (b')  to  node  {$q_1$} (a');
\draw[->> , thin, below]  (b'')  to  node  {$$} (a'');

\end{tikzpicture}
\]%
To see this, let (a) and (b) be satisfied. Since $\c S=0$, we can
pick $C=\c\im(\sigma)=\c P_{0}$. By (b), we have tht $(1:\c)\pi_{\sigma}$ is
an isomorphism. Hence, we can pick $T=(1:\c)\im(\sigma)=(1:\c)P_{-1}$,
and conclude that $(1:\c)\iota_{\sigma}$ is a monomorphism since
$(1:\c)\sigma$ is a monomorphism by (a) and $(1:\c)\pi_{\sigma}$
an isomorphism. The left part of the diagram is deduced from applying $\suc[\c][1][(1:\c)]$
to the morphism $\pi_{\sigma}$, and then applying the Snake Lemma
to the resulting diagram. 

In the case where the diagram above is given, then it is clear that (a) and (b) are satisfied. 
\end{rem}
\begin{rem}\label{rem. auxiliar}
Statement (c) is satisfied if and only if $\mathcal{D}_{\sigma}\cap\mathcal{F}_{I}\cap\mathcal{T}_{I}^{\bot_{1}}=0$
(see Proposition \ref{prop:recollement}(e) and Lemma \ref{lemita d sigma}).
\end{rem}


In addition to the previous equivalences, we have other alternatives
for statement (c) from Proposition \ref{prop:1} in Proposition \ref{prop:2}.
The following lemmas will be useful in the proof of Proposition \ref{prop:2}.


We will be using the following setting. 

\begin{setting}\label{setting 1y2} Consider Setting \ref{setting}
and also assume that statements (a) and (b) from Proposition \ref{prop:1}
are satisfied. Or, equivalently, that the diagram in Remark \ref{rem:presentacion diagrama}
exists. 

\end{setting}

We now have the following result.

\begin{lem}
\label{lem:T en D}With the conditions described in Setting \ref{setting 1y2}, we have that $\mathcal{T}_{I}\subseteq\mathcal{D}_{\sigma}$. 
\end{lem}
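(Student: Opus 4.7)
The plan is to reduce the containment to showing $\mathcal{T}_I\subseteq\mathcal{D}_{\pi_\sigma}$ and then produce the required factorization using the isomorphism $(1:\c)\pi_\sigma$ supplied by (b). By Setting \ref{setting}, we already have $\mathcal{T}_I=\Gen(S)\subseteq S^{\bot_1}=\Coker(\sigma)^{\bot_1}$, so Lemma \ref{lemita d sigma} gives
\[
\mathcal{D}_\sigma \;=\; \mathcal{D}_{\pi_\sigma}\cap S^{\bot_1},
\]
and only the inclusion $\mathcal{T}_I\subseteq \mathcal{D}_{\pi_\sigma}$ remains.

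To prove this inclusion I would pick $T\in\mathcal{T}_I$ and an arbitrary $f:P_{-1}\to T$, and seek $\hat g:\im(\sigma)\to T$ with $\hat g\circ\pi_\sigma=f$. First, factor $f$ through $\rho_{P_{-1}}$: since $\Ker(\rho_{P_{-1}})=\c P_{-1}\in\mathcal{C}_I$ and $T\in\mathcal{T}_I=\mathcal{C}_I^{\bot_0}$, the exact sequence $\c P_{-1}\hookrightarrow P_{-1}\twoheadrightarrow (1:\c)P_{-1}$ yields $\Hom(\c P_{-1},T)=0$, so $\Hom(\rho_{P_{-1}},T)$ is an isomorphism and there exists $\hat f:(1:\c)P_{-1}\to T$ with $\hat f\circ\rho_{P_{-1}}=f$.

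Next, use hypothesis (b) of Proposition \ref{prop:1}: $(1:\c)\pi_\sigma:(1:\c)P_{-1}\to(1:\c)\im(\sigma)$ is an isomorphism. Define
\[
\hat g \;:=\; \hat f\circ\bigl((1:\c)\pi_\sigma\bigr)^{-1}\circ\rho_{\im(\sigma)}:\ \im(\sigma)\longrightarrow T.
\]
Then a direct chase using naturality of $\rho$ with respect to $\pi_\sigma$ (the identity $\rho_{\im(\sigma)}\circ\pi_\sigma=(1:\c)\pi_\sigma\circ\rho_{P_{-1}}$) gives
\[
\hat g\circ\pi_\sigma \;=\; \hat f\circ\bigl((1:\c)\pi_\sigma\bigr)^{-1}\circ(1:\c)\pi_\sigma\circ\rho_{P_{-1}} \;=\; \hat f\circ\rho_{P_{-1}} \;=\; f,
\]
so $T\in\mathcal{D}_{\pi_\sigma}$, and therefore $T\in\mathcal{D}_\sigma$, completing the proof.

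There is no real obstacle here; the argument is essentially the first half of the $(\Leftarrow)$ direction of Proposition \ref{prop:1} isolated as a lemma, and the only thing to double check is the naturality square for $\rho$ used in the final computation, which is immediate from the fact that $\rho$ is a natural transformation of functors $1\to(1:\c)$.
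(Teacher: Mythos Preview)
Your proof is correct and follows essentially the same route as the paper's. The paper phrases the factorization through the diagram of Remark~\ref{rem:presentacion diagrama} (using $q_1=\rho_{P_{-1}}$ and $q$ in place of your $\rho_{\im(\sigma)}$ composed with $((1:\c)\pi_\sigma)^{-1}$), but the underlying argument---factor $f$ through $(1{:}\c)P_{-1}$, transport along the isomorphism $(1{:}\c)\pi_\sigma$, and invoke $\mathcal{T}_I\subseteq S^{\bot_1}$ together with Lemma~\ref{lemita d sigma}---is identical.
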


\begin{proof}
Consider the diagram in Remark \ref{rem:presentacion diagrama}. For
a module $M\in\mathcal{T}_{I}$ and a morphism $f:P_{-1}\rightarrow M$,
observe that $f\circ\mu_{1}=0$ since $C'\in\mathcal{C}_{I}$ and
$M\in\mathcal{T}_{I}$. Therefore, there is a morphism $f':T\rightarrow M$
such that $f'\circ q_{1}=f$. And hence, $f=f'\circ q\circ\pi_{\sigma}$.
Therefore, $M\in\mathcal{D}_{\pi_{\sigma}}$. Lastly, we note that
$M\in S^{\bot_{1}}$ by Setting \ref{setting}, and thus $M\in\mathcal{D}_{\pi_{\sigma}}\cap S^{\bot_{1}}=\mathcal{D}_{\sigma}$. 
\end{proof}
The following result is well-known, but we include it here for clarity and ease of reference.

\begin{lem}
\label{lem:epi vs pushout}\cite[Lemma 2.2]{breaz2018torsion} Consider
the commutative diagram below where the rows are exact. Then,
the middle square is a pushout if and only if $f$ is an epimorphism.
\[
\begin{tikzpicture}[-,>=to,shorten >=1pt,auto,node distance=2cm,main node/.style=,x=1cm,y=-1cm,scale=1.5]

\node (1) at (1,0) {$A$};
\node (2) at (2,0) {$B$};
\node (3) at (3,0) {$C$};
\node (4) at (4,0) {$D$};

\node (1') at (1,1) {$X$};
\node (2') at (2,1) {$ Y$};
\node (3') at (3,1) {$Z$};
\node (4') at (4,1) {$D$};

\draw[right hook -> , thin]  (1)  to  node  {$a$} (2);
\draw[right hook -> , thin]  (1')  to  node  {$x$} (2');

\draw[-> , thin]  (2)  to  node  {$b$} (3);
\draw[-> , thin]  (2')  to  node  {$y$} (3');

\draw[->> , thin]  (3)  to  node  {$$} (4);
\draw[->> , thin]  (3')  to  node  {$$} (4');

\draw[ -> , thin]  (1)  to  node  {$f$} (1');
\draw[-> , thin]  (2)  to  node  {$g$} (2');
\draw[-> , thin]  (3)  to  node  {$h$} (3');
\draw[- , double]  (4)  to  node  {$$} (4');

\end{tikzpicture}
\]%
\end{lem}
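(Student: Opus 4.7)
The plan is to identify the pushout of $C\xleftarrow{b}B\xrightarrow{g}Y$ explicitly as $P:=(C\oplus Y)/R$, where $R$ is the submodule generated by the pairs $(b(\beta),-g(\beta))$ for $\beta\in B$, and to use the universal property to obtain a canonical morphism $\psi:P\to Z$ induced by $h$ and $y$ (possible because $h\circ b=y\circ g$ by commutativity of the middle square). By the very definition of a pushout, the middle square is a pushout precisely when $\psi$ is an isomorphism, so the proof reduces to showing that $\psi$ is an isomorphism if and only if $f$ is an epimorphism.

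First I would check that $\psi$ is always surjective, independently of $f$. Given $z\in Z$, use the surjection $C\twoheadrightarrow D$ to lift the image of $z$ in $D$ to some $c\in C$; then $z-h(c)$ maps to zero in $D$, so by exactness of the bottom row it lies in $\im y$, say $z-h(c)=y(y')$, whence $z=\psi([c,y'])$. Thus the entire content of the lemma is concentrated in the computation of $\Ker\psi$.

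The main computation is to establish an isomorphism $\Ker\psi\cong\Coker f$, from which both implications follow at once. Starting from $[(c,y')]\in\Ker\psi$, meaning $h(c)+y(y')=0$, I would use exactness at $C$ in the top row to write $c=b(\beta)$, and then exactness at $Y$ in the bottom row (applied to $g(\beta)+y'$, which lies in $\Ker y$) to write $g(\beta)+y'=x(x')$ for some $x'\in X$. Modulo $R$, the class $[(c,y')]$ thus becomes $[(0,x(x'))]$. The delicate step is to check that $[(0,x(x'))]$ vanishes in $P$ exactly when $x'\in\im f$: this uses exactness at $B$ in the top row (to force the witnessing $\beta'$ to have the form $a(\alpha)$), commutativity of the left square (to rewrite $g\circ a$ as $x\circ f$), and injectivity of $x$ (to cancel it and recover $x'=-f(\alpha)$). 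Combining these, the assignment $x'\mapsto[(0,x(x'))]$ induces the desired isomorphism $X/\im f\cong\Ker\psi$.

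The main obstacle is this last identification, where three pieces of data (two exactness facts and the commutativity of the leftmost square) must be combined carefully to pin down which representatives vanish in $P$. Everything else is a routine diagram chase using only the exactness of the rows and the universal property of the pushout.
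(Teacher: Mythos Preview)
Your proof is correct. The paper itself does not prove this lemma: it is stated with a citation to \cite[Lemma 2.2]{breaz2018torsion} and labeled as ``well-known,'' so there is no in-paper argument to compare against. Your explicit identification $\Ker\psi\cong\Coker f$ via the diagram chase is the standard way to establish the result, and every step checks out---in particular, the ``delicate step'' is handled correctly: the vanishing of $[(0,x(x'))]$ forces the witness $\beta'$ to satisfy $b(\beta')=0$, hence $\beta'=a(\alpha)$ by exactness at $B$, and then injectivity of $x$ together with $g\circ a=x\circ f$ yields $x'\in\im f$; the converse uses $b\circ a=0$.

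One very minor remark: in your surjectivity argument you implicitly use that the image of $c$ in $D$ equals the image of $h(c)$ in $D$ (commutativity of the rightmost square with the identity on $D$), which is worth stating explicitly since it is what allows you to conclude $c\in\im b$ from $h(c)+y(y')=0$.
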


Now we state the following.

\begin{lem}
\label{lem:diagrama silting preenv}\label{lem:construccion}With the conditions described in  Setting \ref{setting 1y2} and the canonical short exact sequence
$\suc[IM][M][M/IM][\epsilon_{M}][\rho_{M}]$ for a given module $M\in\Rmod$.
Then, the following statements hold for $\alpha:=\Hom(P_{-1},M)$.
\begin{enumerate}
\item There is a commutative diagram with exact rows as the following: 
\[
\begin{tikzpicture}[-,>=to,shorten >=1pt,auto,node distance=2cm,main node/.style=,x=2cm,y=-2cm]

\node (1) at (1,0) {$K^{(\alpha)}$};
\node (2) at (2,0) {$P_{-1} ^{(\alpha)}$};
\node (3) at (3,0) {$P_{0} ^{(\alpha)}$};
\node (4) at (4,0) {$S ^{(\alpha)}$};

\node (1') at (1,1) {$L$};
\node (2') at (2,1) {$M$};
\node (3') at (3,1) {$T$};
\node (4') at (4,1) {$S^{(\alpha)}$};

\draw[right hook -> , thin]  (1)  to  node  {$i$} (2);
\draw[right hook -> , thin]  (1')  to  node  {$a$} (2');

\draw[-> , thin]  (2)  to  node  {$\sigma ^{(\alpha)}$} (3);
\draw[-> , thin]  (2')  to  node  {$\lambda$} (3');

\draw[->> , thin]  (3)  to  node  {$$} (4);
\draw[->> , thin]  (3')  to  node  {$$} (4');

\draw[ ->> , thin]  (1)  to  node  {$\nu$} (1');
\draw[-> , thin]  ( 2)  to  node  {$\phi$} (2');
\draw[-> , thin]  (3)  to  node  {$h$} (3');
\draw[- , double]  (4)  to  node  {$$} (4');

\end{tikzpicture}
\]%
where $\lambda$ is a $\mathcal{D}_{\sigma}$-preenvelope.
\item There is a morphism $q':T\rightarrow M/IM$ such that $q'\circ\lambda=\rho_{M}$. 
\item There is a morphism $\delta:P_{0}^{(\alpha)}\rightarrow M$ such that
$\rho_{M}\circ\delta=q'\circ h$. 
\item There is a morphism $\delta':P_{-1}^{(\alpha)}\rightarrow IM$ such
that $\epsilon_{M}\circ\delta'=\delta\circ\sigma^{(\alpha)}-\phi$.
\item If $M\in S^{\bot_{1}}$ and $\lambda$ is a monomorphism, then $M\in\mathcal{D}_{\sigma}$.
\item If $M\in\mathcal{D}_{\sigma}$, then $\lambda$ is a split monomorphism. 
\item If $S$ is silting with respect to $\sigma$, then the following statements
hold true.
\begin{enumerate}
\item There is a commutative diagram with exact rows as the following: 
\[
\begin{tikzpicture}[-,>=to,shorten >=1pt,auto,node distance=2cm,main node/.style=,x=2.5cm,y=-2cm]

\node (1) at (1,0) {$K^{(\alpha)}$};
\node (2) at (2,0) {$P_{-1} ^{(\alpha)}$};
\node (3) at (3,0) {$P_{0} ^{(\alpha)}$};
\node (4) at (4,0) {$S ^{(\alpha)}$};

\node (1') at (1,1) {$IM$};
\node (2') at (2,1) {$M$};
\node (3') at (3,1) {$M/IM \oplus S^{(\alpha)}$};
\node (4') at (4,1) {$S^{(\alpha)}$};

\draw[right hook -> , thin]  (1)  to  node  {$i$} (2);
\draw[right hook -> , thin]  (1')  to  node  {$\epsilon_{M}$} (2');

\draw[-> , thin]  (2)  to  node  {$\sigma ^{(\alpha)}$} (3);
\draw[-> , thin]  (2')  to  node  {$\left[\begin{smallmatrix}\rho_{M}\\0\end{smallmatrix}\right]$} (3');

\draw[->> , thin]  (3)  to  node  {$c$} (4);
\draw[->> , thin]  (3')  to  node  {$\left[\begin{smallmatrix}0 & 1\end{smallmatrix}\right]$} (4');

\draw[ ->> , thin]  (1)  to  node  {$\nu$} (1');
\draw[-> , thin]  ( 2)  to  node  {$\phi$} (2');
\draw[-> , thin]  (3)  to  node  {$\left[\begin{smallmatrix}h'\\c\end{smallmatrix}\right]$} (3');
\draw[- , double]  (4)  to  node  {$$} (4');

\end{tikzpicture}
\]%
where the morphism $\left[\begin{smallmatrix}\rho_{M}\\
0
\end{smallmatrix}\right]$ is a $\mathcal{D}_{\sigma}$-preenvelope. 
\item $\rho_{M}$ is a $\mathcal{D}_{\sigma}$-preenvelope.
\item $\delta'$ is an epimorphism. 
\end{enumerate}
\end{enumerate}
\end{lem}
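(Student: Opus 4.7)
The plan is to prove (g) in three steps: (g1) identify the preenvelope $\lambda:M\rightarrow T$ from part (a) explicitly as $\left[\begin{smallmatrix}\rho_{M}\\0\end{smallmatrix}\right]:M\rightarrow M/IM\oplus S^{(\alpha)}$; (g2) deduce as an easy consequence; and (g3) restrict the defining identity of $\delta'$ to $K^{(\alpha)}$ to obtain $\delta'\circ i=-\nu$, from which surjectivity follows.

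For (g1), I would first use the silting hypothesis $\mathcal{D}_{\sigma}=\Gen(S)=\mathcal{T}_{I}$ to conclude $T\in\mathcal{T}_{I}$, so $IT=0$ and hence $\lambda(IM)=I\cdot\lambda(M)=0$. Thus $\lambda$ factors through $\rho_{M}$ as some $\bar{\lambda}:M/IM\rightarrow T$. Combined with the morphism $q':T\rightarrow M/IM$ from part (b) satisfying $q'\circ\lambda=\rho_{M}$, cancelling the epimorphism $\rho_{M}$ in $q'\circ\bar{\lambda}\circ\rho_{M}=\rho_{M}$ yields $q'\circ\bar{\lambda}=1_{M/IM}$. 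So $\bar{\lambda}$ is a split monomorphism, giving $T\cong M/IM\oplus\Ker(q')$. One then identifies $\Ker(\lambda)=\Ker(\rho_{M})=IM$ (so $L=IM$ and $a=\epsilon_{M}$ in the diagram of (a)) and $\Ker(q')\cong T/\im(\lambda)\cong S^{(\alpha)}$ via the exact bottom row of (a). Under this decomposition $\lambda$ becomes $\left[\begin{smallmatrix}\rho_{M}\\0\end{smallmatrix}\right]$ and the cokernel map becomes $\left[\begin{smallmatrix}0 & 1\end{smallmatrix}\right]$; writing $h=\left[\begin{smallmatrix}h'\\h''\end{smallmatrix}\right]$, the commutativity of the rightmost square forces $h''=c$. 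That this new morphism is still a $\mathcal{D}_{\sigma}$-preenvelope is inherited from $\lambda$.

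Statement (g2) then follows immediately, either by composing the preenvelope of (g1) with the first projection, or directly: any $f:M\rightarrow D$ with $D\in\mathcal{D}_{\sigma}=\mathcal{T}_{I}$ satisfies $f(IM)\subseteq ID=0$ and hence factors through $\rho_{M}$.

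The more interesting point is (g3). I would plug $i:K^{(\alpha)}\hookrightarrow P_{-1}^{(\alpha)}$ into the identity $\epsilon_{M}\circ\delta'=\delta\circ\sigma^{(\alpha)}-\phi$ from (d). Since $\sigma^{(\alpha)}\circ i=0$ by exactness of the top row, this reduces to $\epsilon_{M}\circ\delta'\circ i=-\phi\circ i$. The commutativity of the leftmost square of the diagram in (a) gives $\phi\circ i=a\circ\nu=\epsilon_{M}\circ\nu$, using the identifications $L=IM$, $a=\epsilon_{M}$ established in (g1). Cancelling the monomorphism $\epsilon_{M}$ yields $\delta'\circ i=-\nu$; and since $\nu$ is surjective by (a), so is $\delta'$. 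The main obstacle in the whole argument is the compatibility bookkeeping in (g1), in particular tying the cokernel copy of $S^{(\alpha)}$ arising from the splitting of $q'$ to the one already sitting in the original diagram so that $h''=c$; once this is in place, (g2) and (g3) drop out quickly.
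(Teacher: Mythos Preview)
Your proof is correct and follows essentially the same strategy as the paper: identify $L=IM$, split $T\cong M/IM\oplus S^{(\alpha)}$, and for (g3) compose the defining relation of $\delta'$ with $i$ to get $\delta'\circ i=-\nu$. The only notable difference is in the execution of (g1): the paper factors $\lambda=\mu_\lambda\circ\pi_\lambda$ through its image and then applies the Snake Lemma together with $\mathcal{C}_I\cap\mathcal{T}_I=0$ to force $L\cong IM$ and $\im(\lambda)\cong M/IM$, whereas you use the more direct observation that $T\in\mathcal{D}_\sigma=\mathcal{T}_I$ implies $\lambda(IM)\subseteq IT=0$, so $\lambda$ factors through $\rho_M$ and the splitting $q'\circ\bar\lambda=1$ immediately gives $\Ker(\lambda)=IM$. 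Both arguments rest on the same fact ($T\in\mathcal{T}_I$), but yours avoids the Snake Lemma detour; parts (g2) and (g3) are identical to the paper's.
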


\begin{proof}
\begin{enumerate}
\item The top row is obtained by taking the coproduct of $\alpha$ copies
of the exact sequence 
\[
\suc[K][P_{-1}\rightarrow P_{0}][S]\text{.}
\]
Let $\phi$ be the induced morphism $P_{-1}^{(\alpha)}\rightarrow M$,
$(x_{f})_{f\in\alpha}\mapsto\sum_{f\in\alpha}f(x_{f})$. Note that
$S^{(\alpha)}\in\Gen(S)=\mathcal{T}_{I}$. Hence, we can follow the
proof of \cite[Proposition 3.12]{siltingmodules} to conclude that
the pushout of $\sigma^{(\alpha)}$ and $\phi$ gives the desired
commutative diagram with exact rows where $\lambda$ is a $\mathcal{D}_{\sigma}$-preenvelope.
Also note that $\nu$ is an epimorphism by Lemma \ref{lem:epi vs pushout}. 
\item Since $M/IM\in\mathcal{T}_{I}\subseteq\mathcal{D}_{\sigma}$ by Lemma
\ref{lem:T en D} and $\lambda$ is a $\mathcal{D}_{\sigma}$-preenvelope
by (a), there is a morphism $q':T\rightarrow M/IM$ such that $q'\circ\lambda=\rho_{M}$. 
\item It follows from the facts that $\rho_{M}$ is an epimorphism and $P_{0}^{(\alpha)}$
is projective.
\item Note that $\rho_{M}\circ\delta\circ\sigma^{(\alpha)}=q'\circ h\circ\sigma^{(\alpha)}=q'\circ\lambda\circ\phi=\rho_{M}\circ\phi$.
Hence, by the universal property of kernels $\delta'$ exists. 
\item In this case, $M$ is a direct summand of a module which is in $\mathcal{D}_{\sigma}$
(see proof of the item (a)). Hence, $M\in\mathcal{D}_{\sigma}$. 
\item Let $M\in\mathcal{D}_{\sigma}$. Since $\lambda$ is a $\mathcal{D}_{\sigma}$-preenvelope,
there is a morphism $p:T\rightarrow M$ such that $p\circ\lambda=1$.
Therefore, $\lambda$ is a split monomorphism. 
\item [(g1)]Let $\lambda=\mu_{\lambda}\circ\pi_{\lambda}$ be the canonical
factorization of $\lambda$ with $\mu_{\lambda}:\im(\lambda)\rightarrow T$
and $\pi_{\lambda}:M\rightarrow\im(\lambda)$. Consider the equality
$q'\circ\lambda=\rho_{M}$. By the Snake Lemma, we have the following
commutative diagram. 
\[
\begin{tikzpicture}[-,>=to,shorten >=1pt,auto,node distance=2cm,main node/.style=,x=2cm,y=-2cm]

\node (1) at (1,0) {$L$};
\node (2) at (2,0) {$M$};
\node (3) at (3,0) {$\im (\lambda)$};

\node (1') at (1,1) {$IM$};
\node (2') at (2,1) {$M$};
\node (3') at (3,1) {$M/IM$};

\draw[right hook -> , thin]  (1)  to  node  {$a$} (2);
\draw[right hook -> , thin]  (1')  to  node  {$\epsilon_{M}$} (2');

\draw[->> , thin]  (2)  to  node  {$\pi_{\lambda}$} (3);
\draw[->> , thin]  (2')  to  node  {$\rho _M$} (3');


\draw[right hook -> , thin]  (1)  to  node  {$\nu '$} (1');
\draw[- , double]  ( 2)  to  node  {$$} (2');
\draw[->> , thin]  (3)  to  node  {$q' \circ \mu_{\lambda}$} (3');

\end{tikzpicture}
\]%
Here, note that $\nu'$ is an epimorphism and $q'\circ\mu_{\lambda}$
is a monomorphism. Indeed, $\Ker(q'\circ\mu_{\lambda})\in\mathcal{T}_{I}$
since it is a submodule of $T$, and $\Coker(\nu')\in\mathcal{C}_{I}$.
And hence, by the Snake Lemma, $\Coker(\nu')\cong\Ker(q'\circ\mu_{\lambda})\in\mathcal{C}_{I}\cap\mathcal{T}_{I}=0$.
Therefore, we can assume that $L=IM$, $\im(\lambda)=M/IM$, $\nu'=1$,
and $q'\circ\mu_{\lambda}=1$. Furthermore, this last equality tells
us that the short exact sequence 
\[
\suc[\im(\lambda)][T][S^{(\alpha)}]
\]
splits. Therefore, we can replace it with the short exact sequence
\[
M/IM\overset{\left[\begin{smallmatrix}1\\
0
\end{smallmatrix}\right]}{\hookrightarrow}M/IM\oplus S^{(\alpha)}\overset{\left[\begin{smallmatrix}0 & 1\end{smallmatrix}\right]}{\twoheadrightarrow}S^{(\alpha)}.
\]
\item [(g2)]It follows from the previous item. 
\item [(g3)]Observe that $\epsilon_{M}\circ\delta'\circ i=\delta\circ\sigma^{(\alpha)}\circ i-\phi\circ i=-\phi\circ i=-\epsilon_{M}\circ\nu$.
Therefore, $\delta'\circ i=-\nu$ and thus $\delta'$ is an epimorphism. 
\end{enumerate}
\end{proof}
\begin{prop}
\label{prop:2} With the conditions described in  Setting \ref{setting}, the following
statements are equivalent. 
\begin{enumerate}
\item [(1)]$S$ is silting with respect to $\sigma$. 
\item [(2)]Statements (a) and (b) from Proposition \ref{prop:1} hold true
together with:
\begin{enumerate}
\item [(c')] The canonical epimorphism $R\rightarrow R/I$ is a $\mathcal{D}_{\sigma}$-preenvelope.
\end{enumerate}
\item [(3)] Statements (a) and (b) from Proposition \ref{prop:1} hold
true together with:
\begin{enumerate}
\item [(c'')] $\mathcal{D}_{\sigma}\cap\mathcal{F}_{I}=0$. 
\end{enumerate}
\item [(4)] Statements (a) and (b) from Proposition \ref{prop:1} hold
true together with:
\begin{enumerate}
\item [(c''')] $\mathcal{D}_{\sigma}\subseteq\mathcal{T}_{I}$. 
\end{enumerate}
\end{enumerate}
\end{prop}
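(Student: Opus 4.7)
The plan is to prove the chain of implications $(1) \Rightarrow (2) \Rightarrow (3) \Rightarrow (4) \Rightarrow (1)$. For the implication $(1) \Rightarrow (2)$, I will use that if $S$ is silting with respect to $\sigma$, then $\mathcal{D}_{\sigma}=\mathcal{T}_{I}$, so $(a)$ and $(b)$ hold by Proposition \ref{prop:1}. For $(c')$, I will observe that any $D\in\mathcal{D}_{\sigma}=\mathcal{T}_{I}$ satisfies $ID=0$, so every $f:R\rightarrow D$ automatically annihilates $I$ and factors through $\rho:R\twoheadrightarrow R/I$. The same hypothesis $\mathcal{D}_{\sigma}=\mathcal{T}_{I}$ instantly yields $(c''')$ (trivially) and $(c'')$ (since $\mathcal{T}_I \cap \mathcal{F}_I = 0$), which handles $(1)\Rightarrow(3)$ and $(1)\Rightarrow(4)$ simultaneously, though for the cyclic proof I only need $(1)\Rightarrow(2)$.

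For $(2)\Rightarrow(3)$, I will pick $M\in\mathcal{D}_{\sigma}\cap\mathcal{F}_{I}$ and show $M=0$. Given any $f:R\rightarrow M$, the preenveloping property $(c')$ supplies some $g:R/I\rightarrow M$ with $g\circ\rho=f$. Since $R/I\in\mathcal{T}_{I}$ and $M\in\mathcal{F}_{I}={}^{\bot_{0}}\emptyset$, we have $\Hom(R/I,M)=0$, forcing $g=0$ and hence $f=0$. As $R$ is a generator, this gives $M=0$.

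For $(3)\Rightarrow(4)$, let $M\in\mathcal{D}_{\sigma}$ and consider the canonical sequence $\suc[\t M][M][(1:\t)M]$ arising from the torsion pair $\t$. Since $\mathcal{D}_{\sigma}$ is closed under quotients (Proposition following \cite[Lemma 3.6]{siltingmodules}), the torsion-free quotient $(1:\t)M$ lies in $\mathcal{D}_{\sigma}\cap\mathcal{F}_{I}$, which is zero by $(c'')$. Thus $M=\t M\in\mathcal{T}_{I}$, establishing $(c''')$.

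Finally, for $(4)\Rightarrow(1)$, note that hypotheses $(a)$ and $(b)$ place us in Setting \ref{setting 1y2}, so Lemma \ref{lem:T en D} yields $\mathcal{T}_{I}\subseteq\mathcal{D}_{\sigma}$; combined with $(c''')$ this gives $\mathcal{D}_{\sigma}=\mathcal{T}_{I}=\Gen(S)$, which is exactly the statement that $S$ is silting with respect to $\sigma$. The main obstacle is really conceptual rather than technical: recognizing that all three flavors of condition $(c)$, $(c')$, $(c'')$, $(c''')$ are genuine equivalents of the original third condition from Proposition \ref{prop:1} once the torsion-theoretic machinery $\mathcal{T}_{I}\subseteq\mathcal{D}_{\sigma}$ (from Lemma \ref{lem:T en D}) is in place, so the work reduces to the short torsion-pair argument in $(3)\Rightarrow(4)$ and the generator argument in $(2)\Rightarrow(3)$.
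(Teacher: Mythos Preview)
Your argument is correct and follows essentially the same route as the paper: the paper proves $(1)\Rightarrow(2)\Rightarrow(3)\Rightarrow(1)$ together with $(3)\Leftrightarrow(4)$, while you run the full cycle $(1)\Rightarrow(2)\Rightarrow(3)\Rightarrow(4)\Rightarrow(1)$, and the $(2)\Rightarrow(3)$ step is identical in both. Two small remarks: your direct argument for $(c')$ in $(1)\Rightarrow(2)$ (any map $R\to D$ with $ID=0$ factors through $R/I$) is more elementary than the paper's, which invokes Lemma~\ref{lem:diagrama silting preenv}(g2); and the expression ``$M\in\mathcal{F}_{I}={}^{\bot_{0}}\emptyset$'' is a slip---you mean $\mathcal{F}_{I}=\mathcal{T}_{I}^{\bot_{0}}$ (or $(R/I)^{\bot_0}$), so that $\Hom(R/I,M)=0$.
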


\begin{proof} $(1)\implies (2)$ Follows from Lemma \ref{lem:diagrama silting preenv}(g2)
and Proposition \ref{prop:1}. 

$(2)\implies (3)$ Let $M\in\mathcal{D}_{\sigma}\cap\mathcal{F}$.
For every $m\in M$, consider the morphism $f_{m}:R\rightarrow M$
such that $r\mapsto r\cdot m$. Since $M\in\mathcal{D}_{\sigma}$
and $\rho_{R}:R\rightarrow R/I$ is a $\mathcal{D}_{\sigma}$-preenvelope,
there is a morphism $g_{m}:R/I\rightarrow M$ such that $g_{m}\circ\rho_{R}=f_{m}$.
But, since $M\in\mathcal{F}_{I}$ and $R/I\in\mathcal{T}_{I}$, $g_{m}=0$
for all $m\in M$. Therefore, $M=0$. 

$(3)\implies (1)$ It is enough to prove statement (c) from Proposition
\ref{prop:1}. But this follows straightforward by Remark \ref{rem. auxiliar}.

Lastly, equivalence $(3)\iff (4)$ follows from routine
arguments. 
\end{proof}

\section{The trace ideal of a projective module and its TTF triple}

Let $P\in\Proj(R)$ and consider its trace $\Tr(P)=\sum_{f\in P^{*}}\im(f)$, a two-sided idempotent ideal. Hence, there is a corresponding TTF triple associated to it as follows:
\[
\triple[\Gen(I)][\Gen(R/I)][(R/I)^{\bot_{0}}]=\triple=(\Gen(P),P^{\bot_{0}},(P^{\bot_{0}})^{\bot_{0}}).
\]
 For more details, the reader is referred to \cite{zbMATH03533104,zbMATH03286961}.
\begin{example}
Let $I$ be an idempotent two-sided ideal and $\triple$ be the corresponding
TTF triple. Observe that, in case there is an epimorphism $P\rightarrow I$
with $P\in\Proj(R)$, then $I$ is the trace of $P$ if and only if $P\in\mathcal{C}_{I}$.
Using this fact, we have the following examples. 
\begin{enumerate}
\item If $R$ is a perfect ring, then $I$ is the trace of a projective
module. Indeed, from \cite[Lemma 7.3]{zbMATH06669084}, we known  that, if $P\rightarrow I$ is the projective cover of $I$, then $P\in\mathcal{C}_{I}$. Therefore, $I=\Tr(P)$. 
\item If $R$ is a hereditary ring (i.e. a  ring where the class of projective modules is closed under submodules), then $I$ is projective. Thus, $I$
is the trace of the projective module $I$. 
\item Recall that $R$ is a von Neumann regular ring if, for every principal
left ideal $Rx$, there is an idempotent $e\in R$ such that $Rx=Re$.
Note that in this case $P:=\bigoplus_{x\in I}Rx\in\mathcal{C}_{I}\cap\Proj(R)$.
Indeed, for every $x\in I$ there is an idempotent $e\in I$ such
that $Rx=Re$. Moreover, on the one hand, $Re\subseteq Ie$ since
\[
Re\subseteq I=Ie\oplus I(1-e)
\]
 and $Re\cap I(1-e)=0$. And, on the other hand, $Ie\subseteq Re$
since $I\subseteq R$. Therefore, for every $x\in I$, $Rx=Re=Ie$. And thus, $Rx\in\mathcal{C}_{I}$ because
$Ie$ is a direct summand of $I$, and $Rx$ is projective since $Re$ is a direct summand of $R$. And thus, $I$
is the trace of $P$. For further details on this example, the reader
is referred to \cite[Theorem 4.5]{bravo2019t}.
\item Recently Li, \cite{zbMATH07799812}, showed different situations
where every $M\in\mathcal{C}_{I}$ admits an epimorphism $P\rightarrow M$
with $P\in\Proj(R)\cap\mathcal{C}_{I}$.
\end{enumerate}
\end{example}

\begin{prop}
\label{prop:a y b vs traza}Consider the conditions described in Setting \ref{setting 1y2}
for $S:=R/I$. If $R/I$ admits a projective cover, then $I$ is the
trace of a projective module.
\end{prop}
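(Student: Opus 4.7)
The plan is to exhibit a projective module $P^{*}$ lying in $\mathcal{C}_{I}$ together with an epimorphism $P^{*}\twoheadrightarrow I$, whereupon the Example preceding this proposition immediately yields $I=\Tr(P^{*})$.

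Let $q:Q\twoheadrightarrow R/I$ be the projective cover. Applying Lemma \ref{lem:cubierta proyectiva} twice, first to $\rho_{R}:R\twoheadrightarrow R/I$ and then to the canonical epimorphism $P_{0}\twoheadrightarrow R/I$ induced by $\sigma$, one obtains projective modules $P',P$ and isomorphisms $R\cong Q\oplus P'$, $I\cong\Ker(q)\oplus P'$, $P_{0}\cong Q\oplus P$, $\im(\sigma)\cong\Ker(q)\oplus P$. Since $I\in\mathcal{C}_{I}$ (as $I^{2}=I$) and $\mathcal{C}_{I}$ is closed under quotients, both summands $\Ker(q)$ and $P'$ of $I$ lie in $\mathcal{C}_{I}$. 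Using that $P$ is projective, the composition $P_{-1}\overset{\pi_{\sigma}}{\twoheadrightarrow}\im(\sigma)\twoheadrightarrow P$ is a split epimorphism, yielding a decomposition $P_{-1}\cong P_{-1}'\oplus P$ with $P_{-1}'$ a projective direct summand, and the restriction $\bar{\sigma}:=\pi_{\sigma}|_{P_{-1}'}:P_{-1}'\twoheadrightarrow\Ker(q)$ is surjective.

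The crucial step is to show $P_{-1}'\in\mathcal{C}_{I}$. Written in block form with respect to the decompositions above, $\pi_{\sigma}=\bigl(\begin{smallmatrix}\bar{\sigma} & \alpha\\ 0 & 1\end{smallmatrix}\bigr)$ for some $\alpha:P\to\Ker(q)$. Applying the torsion coradical $(1:\c)$ and using $(1:\c)\Ker(q)=0$, the map $(1:\c)\pi_{\sigma}$ collapses to the second-coordinate projection $(1:\c)P_{-1}'\oplus(1:\c)P\to(1:\c)P$, which by hypothesis (b) of Proposition \ref{prop:1} is an isomorphism, forcing $(1:\c)P_{-1}'=0$. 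Setting $P^{*}:=P_{-1}'\oplus P'\in\Proj(R)\cap\mathcal{C}_{I}$, the morphism $\bar{\sigma}\oplus 1_{P'}:P^{*}\twoheadrightarrow\Ker(q)\oplus P'\cong I$ is the required epimorphism, and the Example closes the argument. The main technical moment is this block-matrix collapse: once $\Ker(q)\in\mathcal{C}_{I}$ is identified, one must verify that the off-diagonal contributions to $(1:\c)\pi_{\sigma}$ actually die, so that hypothesis (b) can be leveraged to pin down $P_{-1}'$ precisely.
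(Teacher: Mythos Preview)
Your proof is correct and follows essentially the same route as the paper's: apply Lemma~\ref{lem:cubierta proyectiva} to both $R\twoheadrightarrow R/I$ and $P_{0}\twoheadrightarrow R/I$, split off the projective summand of $\im(\sigma)$ from $P_{-1}$, and then use condition~(b) of Proposition~\ref{prop:1} on the resulting block matrix to force the complementary summand into $\mathcal{C}_{I}$. Your notation swaps the roles of $P$ and $P'$ relative to the paper (their $\hat{Q}$ is your $P_{-1}'$, their $a$ is your $\bar{\sigma}$), but the skeleton is identical. One small point: you assert surjectivity of $\bar{\sigma}$ before displaying the matrix form of $\pi_{\sigma}$; it would read more cleanly to state the matrix first and then observe that surjectivity of $\pi_{\sigma}$ together with the lower row $[0\ 1]$ forces $\bar{\sigma}$ onto --- this direct argument is in fact slightly more transparent than the paper's detour through $\c\pi_{\sigma}$ and Remark~\ref{rem:presentacion diagrama}.
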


\begin{proof}
Let $\pi':Q\rightarrow R/I$ be a projective cover. Consider the following
short exact sequences 
\begin{align}
\suc[I][R][R/I][\epsilon_{R}][\rho_{R}] \\
\suc[K][Q][R/I][k][\pi']\\
\suc[\operatorname{Im}(\sigma)][P_{0}][R/I][\iota_{\sigma}][c] 
\end{align}
On one hand, by Lemma \ref{lem:cubierta proyectiva}, we have
that there is $P\in\Proj(R)$ such that $I\cong K\oplus P$, and thus
$P,K\in\mathcal{C}_{I}$. In particular, by observing the exact sequence
(2), we can conclude that $\c Q\cong\c K=K$. Also by
Lemma \ref{lem:cubierta proyectiva}, we have that there is $P'\in\Proj(R)$
such that $\im(\sigma)\cong K\oplus P'$. Observe that, since $P'\in\Proj(R)$,
the epimorphism given by the composition of the following maps
\[
P_{-1}\overset{\pi_{\sigma}}{\rightarrow}\im(\sigma)=K\oplus P'\overset{\left[\begin{smallmatrix}0 & 1\end{smallmatrix}\right]}{\rightarrow}P'
\]
splits. Thus, we have that there is $\hat{Q}\in\Proj(R)$ such
that $P_{-1}=\hat{Q}\oplus P'$, and therefore $\pi_{\sigma}:\hat{Q}\oplus P'\rightarrow K\oplus P'$
can be expressed as some matrix $\left[\begin{smallmatrix}a & b\\
0 & 1
\end{smallmatrix}\right]$. 

Now, by statement (b) from Proposition \ref{prop:1}, 
\[
(1:\c)(\pi_{\sigma}):(1:\c)P_{-1}\rightarrow(1:\c)\im\sigma
\]
 is an isomorphism. Observe that $(1:\c)P_{-1}=(1:\c)\hat{Q}\oplus(1:\c)P'$
and that $(1:\c)\im\sigma=(1:\c)P'$, since $(1:\c)K=0$. Thus, $(1:\c)(\pi_{\sigma})$
can be expressed as the matrix 
\[
\left[\begin{smallmatrix}(1:\c)(b)\\
(1:\c)(1)
\end{smallmatrix}\right]=\left[\begin{smallmatrix}(1:\c)(b)\\
1
\end{smallmatrix}\right]:(1:\c)\hat{Q}\oplus(1:\c)P'\rightarrow(1:\c)P'.
\]
Note that, if this matrix is an isomorphism, then $(1:\c)\hat{Q}=0$,
and thus $\hat{Q}\in\mathcal{C}_{I}$.

Let us prove that $a:\hat{Q}\rightarrow K$ is an epimorphism. For
this, recall that $\c\pi_{\sigma}=\left[\begin{smallmatrix}\c a & \c b\\
0 & 1
\end{smallmatrix}\right]$ is an epimorphism by Remark \ref{rem:presentacion diagrama}.
Thus, it can be seen that $\c a$ also is an epimorphism. But
$\c a=a$ since $(1:\c)\hat{Q}=0$. Therefore, $a$ is an epimorphism. 

In conclusion, there is an epimorphism $\hat{Q}\oplus P\overset{\left[\begin{smallmatrix}\c a & 0\\
0 & 1
\end{smallmatrix}\right]}{\rightarrow}K\oplus P=I$ with $\hat{Q}\oplus P\in\mathcal{C}_{I}\cap\Proj(R)$. Therefore,
$I$ is the trace of $\hat{Q}\oplus P$. 
\end{proof}
\begin{example}
Let $R$ be a local (not necessarily commutative) ring with a nonzero
idempotent maximal ideal $I$.  If we assume that $R/I$ is silting,
then it follows from Propositions \ref{prop:1} and \ref{prop:a y b vs traza}
that $I$ is the trace of a projective module since the natural projection $R\rightarrow R/I$ is a projective cover. 
Next,
it is known that all projective modules over a local ring are free
(see \cite[Corollary 3.3]{zbMATH03187694}). Hence, we get that $I=R$,
a contradiction. Therefore, $R/I$ is not a silting module. For an
alternative approach to this example, when $R$ a commutative local
ring, see \cite[Example 5.4]{silting2017}.
\end{example}

We now  prove Theorem B.
\begin{thmB}Let $I$ be an idempotent two-sided
ideal of $R$. If $R/I$ admits a projective cover and $R/I$ is silting,
then $I$ is the trace of a projective module. 

\end{thmB}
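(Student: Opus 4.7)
The plan is to derive Theorem B as an immediate corollary of Proposition \ref{prop:a y b vs traza}, since the substantive work has already been packaged there. What remains is to verify that the hypothesis that $R/I$ is silting places us exactly in Setting \ref{setting 1y2} with $S:=R/I$, after which the projective cover hypothesis lets us invoke that proposition verbatim.

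Concretely, first I would unpack the silting hypothesis: by definition, there is a projective presentation $\sigma\colon P_{-1}\to P_{0}\twoheadrightarrow R/I$ with $\mathcal{D}_{\sigma}=\Gen(R/I)$. Using the bijection between idempotent two-sided ideals and TTF triples recalled in the preliminaries, $\Gen(R/I)=\mathcal{T}_{I}$, and from $\mathcal{D}_{\sigma}\subseteq(R/I)^{\bot_{1}}$ we deduce $\mathcal{T}_{I}\subseteq S^{\bot_{1}}$. This confirms all items of Setting \ref{setting} for $S=R/I$. Next, because $S$ is silting with respect to $\sigma$, Proposition \ref{prop:1} applies and yields, in particular, statements (a) and (b) of that proposition; this is precisely what is required by Setting \ref{setting 1y2}.

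With the setting in place, and the standing hypothesis that $R/I$ admits a projective cover, Proposition \ref{prop:a y b vs traza} applies directly and asserts that $I$ is the trace of a projective module, completing the proof.

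I do not anticipate any genuine obstacle: the argument is purely bookkeeping, reducing Theorem B to Proposition \ref{prop:a y b vs traza}. The only point worth double-checking is that the silting hypothesis supplies \emph{some} projective presentation satisfying conditions (a) and (b) of Proposition \ref{prop:1}, which is clear since Proposition \ref{prop:1} is an ``if and only if'' statement for any such presentation; so no choice of presentation has to be made by hand. All the real work, namely comparing the projective cover of $R/I$ with the presentation $\sigma$ via Lemma \ref{lem:cubierta proyectiva}, showing that the extra summand $\hat{Q}$ lies in $\mathcal{C}_{I}$ using condition (b), and producing the epimorphism from a projective module in $\mathcal{C}_{I}$ onto $I$, is already carried out inside the proof of Proposition \ref{prop:a y b vs traza}.
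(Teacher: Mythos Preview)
Your proposal is correct and follows essentially the same route as the paper: use Proposition~\ref{prop:1} to place $R/I$ in Setting~\ref{setting 1y2}, then apply Proposition~\ref{prop:a y b vs traza}. The paper's proof is the same bookkeeping reduction, stated in two sentences.
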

\begin{proof}
 Since $R/I$ is silting, it follows from Proposition
\ref{prop:1} that $R/I$ satisfies Setting \ref{setting 1y2}. Therefore,
$I$ is the trace of a projective module by Proposition \ref{prop:a y b vs traza}. 
\end{proof}
Consider now the following set of conditions.
\begin{setting}\label{setting2} 
\begin{enumerate}
    \item Let $P$ be a projective module,
    \item $P^{*}:=\Hom(P,R)$.
    \item $I:=Tr(P)$, and $\triple$ the corresponding TTF triple.
    \item Consider $P_{-1}:=P^{(P^{*})}$, $P_{0}:=R$, the canonical epimorphism $\pi_{\sigma}:P_{-1}\rightarrow I$, the inclusion $\iota_{\sigma}:I\rightarrow P_{0}$, and the morphism $\sigma:=\iota_{\sigma}\circ\pi_{\sigma}$.
\end{enumerate}
\end{setting}

\begin{rem}
Note that Setting \ref{setting2} is a special case of Setting \ref{setting 1y2}.
Indeed, statement (a) from Proposition \ref{prop:1} is satisfied
since $(1:\c)P_{0}=R/I$ and $(1:\c)P_{-1}=0$. And, statement (b)
from Proposition \ref{prop:1} is satisfied since $(1:\c)\im\sigma=(1:\c)I=0$
and $(1:\c)P_{-1}=0$.
\end{rem}

The following theorem is inspired by \cite[Proposition  2.1]{breaz2018torsion}. 
\begin{thm}
\label{thm:traza de proyectivo}Consider the conditions described in Setting \ref{setting2}.
Then, $R/I$ is a silting module with respect to $\rho$, where $\rho$
is the morphism 
\[
\left[\begin{smallmatrix}\sigma & 0 & 0\\
0 & 0 & 1
\end{smallmatrix}\right]:P\oplus P\oplus R\rightarrow R\oplus R.
\]
\end{thm}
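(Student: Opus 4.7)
The plan is to apply Proposition \ref{prop:1}, via its equivalent reformulation in Proposition \ref{prop:2}, to the morphism $\rho$. First, since $\im(\sigma)=I$, one has $\im(\rho)=I\oplus R$ inside $R\oplus R$, hence $\Coker(\rho)\cong R/I$, so $\rho$ is indeed a projective presentation of $R/I$. The canonical factorization $\rho=\iota_\rho\circ\pi_\rho$ takes the block form
\[
\pi_\rho=\left[\begin{smallmatrix}\pi_\sigma & 0 & 0\\ 0 & 0 & 1\end{smallmatrix}\right]\colon P\oplus P\oplus R\to I\oplus R,\qquad \iota_\rho=\left[\begin{smallmatrix}\iota_\sigma & 0\\ 0 & 1\end{smallmatrix}\right]\colon I\oplus R\to R\oplus R.
\]
The framing hypothesis $\Gen(R/I)=\mathcal{T}_{I}\subseteq(R/I)^{\bot_{1}}$ required by Setting \ref{setting} is automatic, since $R/I$ is always a finendo quasi-tilting module when $I$ is an idempotent two-sided ideal.

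For conditions (a) and (b) of Proposition \ref{prop:1}, I apply the torsion coradical $(1:\c)$ to the matrices above. Since $\Gen(P)=\mathcal{C}_{I}$, both $P$ and $I$ lie in $\mathcal{C}_{I}$, so $(1:\c)P=0=(1:\c)I$, while $(1:\c)(1_{R})=1_{R/I}$. Applying $(1:\c)$ entrywise (using the formula recorded at the end of the torsion-pair preliminaries), every block involving $P$ or $I$ vanishes. Thus $(1:\c)\pi_\rho$ reduces to the identity on $R/I$, which is condition (b), and $(1:\c)\iota_\rho$ reduces to the split inclusion $R/I\hookrightarrow R/I\oplus R/I$ into the second factor, so that $(1:\c)\rho$ is a split monomorphism, giving condition (a).

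For the remaining condition, I will use Proposition \ref{prop:2}\,(4), which reduces the problem to verifying $\mathcal{D}_\rho\subseteq\mathcal{T}_{I}$. Take $X\in\mathcal{D}_\rho$ and an arbitrary $h\colon P\to X$, and consider the morphism $(0,h,0)\colon P\oplus P\oplus R\to X$ that is zero on the first and third summands. Since $\Hom(\rho,X)$ is surjective, this morphism factors as $f\circ\rho$ for some $f\colon R\oplus R\to X$. But the middle column of $\rho$ is zero, so $f\circ\rho$ vanishes on the middle $P$-summand for every $f$, forcing $h=0$. Therefore $\Hom(P,X)=0$, i.e., $X\in P^{\bot_{0}}=\mathcal{T}_{I}$.

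The argument is essentially block-matrix bookkeeping with no single hard step. Conceptually, the two auxiliary summands in the domain of $\rho$ play complementary roles: the extra copy of $P$ exists so that any $X\in\mathcal{D}_\rho$ must satisfy $\Hom(P,X)=0$ and hence lie in $\mathcal{T}_{I}$, while the identity block on $R$ lets conditions (a) and (b) be read off cleanly from the trace data for $P$.
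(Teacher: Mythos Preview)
Your proof is correct. Both your argument and the paper's share the key step verbatim: the inclusion $\mathcal{D}_{\rho}\subseteq\mathcal{T}_{I}$ via the middle-column trick $(0,h,0)$. The difference lies in the other half. The paper bypasses the machinery of Section~3 entirely and proves $\mathcal{T}_{I}\subseteq\mathcal{D}_{\rho}$ by a one-line direct computation: any $f\colon P_{-1}\oplus P_{-1}\oplus R\to M$ with $M\in\mathcal{T}_{I}$ has the form $[0\ 0\ a]$ (because $P_{-1}\in\mathcal{C}_{I}$), and $[0\ 0\ a]=[0\ a]\circ\rho$. You instead verify conditions (a) and (b) of Proposition~\ref{prop:1} by applying the coradical $(1:\c)$ blockwise, then invoke Proposition~\ref{prop:2}(4). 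Your route is slightly longer but has the merit of showing explicitly that Theorem~\ref{thm:traza de proyectivo} is an instance of the general framework built in Section~3; the paper's route is more elementary and self-contained, giving the equality $\mathcal{D}_{\rho}=\mathcal{T}_{I}$ with no appeal to earlier propositions.
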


\begin{proof}
Consider the morphisms $\sigma:P\rightarrow R$ and $\left[\begin{smallmatrix}0 & 1\end{smallmatrix}\right]:P\oplus R\rightarrow R$.
Note that $\sigma\oplus\left[\begin{smallmatrix}0 & 1\end{smallmatrix}\right]=\rho$,
and thus $\Coker(\rho)=\Coker(\sigma)=R/I$. 

Let us prove that $\mathcal{T}_{I}=\mathcal{D}_{\rho}$.
On the one hand, if $M\in\mathcal{T}_{I}$, then every $f\in\Hom(P\oplus P\oplus R,M)$
can be seen as the matrix $\left[\begin{smallmatrix}0 & 0 & a\end{smallmatrix}\right]$
since $P\in\mathcal{C}_{I}$. And hence, $M\in\mathcal{D}_{\rho}$
because 
\[
\left[\begin{smallmatrix}0 & 0 & a\end{smallmatrix}\right]=\left[\begin{smallmatrix}0 & a\end{smallmatrix}\right]\circ\left[\begin{smallmatrix}\sigma & 0 & 0\\
0 & 0 & 1
\end{smallmatrix}\right].
\]
On the other hand, for every $b\in\Hom(P,M)$, we can consider the
morphism $\left[\begin{smallmatrix}0 & b & 0\end{smallmatrix}\right]:P\oplus P\oplus R\rightarrow M$.
If $M\in\mathcal{D}_{\rho}$, then there is a morphism $\left[\begin{smallmatrix}f & g\end{smallmatrix}\right]:R\oplus R\rightarrow M$
such that 
\[
\left[\begin{smallmatrix}0 & b & 0\end{smallmatrix}\right]=\left[\begin{smallmatrix}f & g\end{smallmatrix}\right]\circ\left[\begin{smallmatrix}\sigma & 0 & 0\\
0 & 0 & 1
\end{smallmatrix}\right]=\left[\begin{smallmatrix}f\circ\sigma & 0 & g\end{smallmatrix}\right].
\]
Therefore, $M\in P^{\bot_{0}}=\mathcal{T}_{I}$. 
\end{proof}

Note that Theorem A is contained in Theorem \ref{thm:traza de proyectivo}.
\begin{thmA} Let $I$ be an idempotent two-sided
ideal of $R$. If $I$ is the trace of a projective left (resp. right) module,
then $R/I$ is a silting left (resp. right) module. 
\end{thmA}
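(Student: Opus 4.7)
The plan is to deduce Theorem A as an immediate consequence of Theorem \ref{thm:traza de proyectivo}, which has already been established above. Suppose $I$ is the trace of a projective left $R$-module $P$. I would instantiate Setting \ref{setting2} with this $P$: this produces the data $P_{-1} := P^{(P^*)}$, $P_{0} := R$, the canonical epimorphism $\pi_{\sigma} : P_{-1} \twoheadrightarrow I$ (surjective precisely because $I = \Tr(P) = \sum_{f \in P^*} \im(f)$), the inclusion $\iota_{\sigma} : I \hookrightarrow R$, and the composition $\sigma := \iota_{\sigma} \circ \pi_{\sigma}$, so that $\Coker(\sigma) = R/I$.

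Next, I would apply Theorem \ref{thm:traza de proyectivo} directly to these data. That theorem manufactures a slightly augmented morphism $\rho$ between projective modules with $\Coker(\rho) = R/I$ and, crucially, with $\mathcal{D}_{\rho} = \mathcal{T}_I$. Since $\mathcal{T}_I = \Gen(R/I)$, this equality is exactly the definition of $R/I$ being silting with respect to $\rho$. Hence $R/I$ is a silting left $R$-module, which proves the statement. The right-module version follows by applying the same argument to $R^{\mathrm{op}}$; the preliminaries explicitly note that the whole framework is symmetric in sides.

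The only delicate point, which is already handled inside the proof of Theorem \ref{thm:traza de proyectivo}, is the nontrivial inclusion $\mathcal{D}_{\rho} \subseteq \mathcal{T}_I$. This is the reason $\rho$ is taken with its particular shape rather than as $\sigma$ itself: the extra direct summand of $P$ in the domain of $\rho$ allows one to test $\Hom(\rho, M)$ against maps of the form $[0,\, b,\, 0]$ with $b : P \to M$ and thereby conclude $b = 0$, which forces $M \in P^{\bot_{0}} = \mathcal{T}_I$. The converse inclusion $\mathcal{T}_I \subseteq \mathcal{D}_{\rho}$ is essentially automatic, since any morphism out of a coproduct involving $P$ lands trivially on the $P$-summands when the target lies in $P^{\bot_{0}}$.
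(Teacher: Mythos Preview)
Your proposal is correct and matches the paper's own proof almost verbatim: the paper simply observes that Setting \ref{setting2} is fulfilled and then invokes Theorem \ref{thm:traza de proyectivo} to conclude that $R/I$ is silting with respect to the morphism $\rho$ constructed there. Your additional commentary on why $\rho$ is shaped the way it is merely recapitulates the proof of Theorem \ref{thm:traza de proyectivo} and does not constitute a different route.
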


\begin{proof} Indeed, we have that Setting \ref{setting2} is fulfilled. Therefore,  $R/I$ is silting with respect to the morphism built in Theorem \ref{thm:traza de proyectivo}.
\end{proof}

\begin{example}
Let $R$ be the ring of continuous functions on $[0,1]$ and $I$
be the ideal of all functions vanishing in some neighborhood of $0$.
By 
 \cite[Example (2)]{zbMATH03187694}, $I$ is a projective module such that $I=Tr (I)$. Therefore, it follows from
Theorem \ref{thm:traza de proyectivo} that $R/I$ is a silting module
and $\mathcal{T}_{I}$ a silting class. 
\end{example}

\begin{cor}
Let $I$ be an idempotent two-sided ideal that is finitely generated
as a right ideal. Then, $R/I$ is a silting left module. 
\end{cor}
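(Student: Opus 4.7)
The strategy is to reduce to Theorem A, which applies whenever $I$ is the trace of a projective left $R$-module. Thus the task is to exhibit a projective left $R$-module $P$ with $\Tr(P) = I$.

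Using the hypotheses, I would write $I = x_{1} R + \cdots + x_{n} R$ with $x_{i} \in I$. The idempotency of $I$, combined with $I$ being two-sided (so $R \cdot I = I$), yields
$I = I^{2} = I \cdot \sum_{k} x_{k} R = \sum_{k} x_{k} I$.
Hence each generator admits a decomposition $x_{i} = \sum_{k=1}^{n} x_{k} y_{ik}$ with $y_{ik} \in I$, and bundling these coefficients into the matrix $Y = (y_{ik}) \in M_{n}(I)$ gives the identity $xY = x$ for the row vector $x = (x_{1}, \ldots, x_{n})$. A short check shows that the two-sided ideal of $R$ generated by the entries of $Y$ is precisely $I$ (it contains each $x_{i} = \sum_{k} x_{k} y_{ik}$, hence all of $I$, while obviously sitting inside $I$).

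With this data in hand, the next step is to promote $Y$ to an idempotent matrix $E \in M_{m}(R)$ (for some $m$) whose entries still generate $I$ as a two-sided ideal. Given such an $E$, the finitely generated projective left $R$-module $P := R^{m} E$ satisfies $\Tr(P) = R E R = I$, and Theorem A concludes that $R/I$ is a silting left $R$-module.

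The main obstacle is the construction of the idempotent $E$, since $Y$ itself is generally not idempotent. The tools available are the relation $xY = x$ together with the fact that the entries of $Y$ lie in $I = I^{2}$, which permits further decomposition of each $y_{ik}$ as products of elements of $I$. A concrete route is a block-matrix construction in $M_{2n}(R)$ (in the spirit of idempotent-lifting arguments), assembling $Y$ and auxiliary blocks with entries in $I$ into an idempotent whose entries still generate $I$. An equivalent and conceptually cleaner route is to establish first that the right $R$-module $I$ is finitely generated projective: the decomposition $x_{i} = \sum_{k} x_{k} y_{ik}$ is the natural candidate for a dual basis splitting the surjection $R^{n} \twoheadrightarrow I$ of right modules (with $\psi_{i}(x_{j}) := y_{ji}$), and its well-definedness must be extracted from the relations among the $x_{i}$ afforded by $I = I^{2}$. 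Once $I \cong e R^{n}$ is obtained for an idempotent $e \in M_{n}(R)$, the left $R$-module $P := R^{n} e$ is finitely generated projective with $\Tr(P) = R e R = I$, and Theorem A completes the proof.
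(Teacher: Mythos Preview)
Your overall strategy---show that $I$ is the trace of a projective left module and then apply Theorem~A---is exactly what the paper does. The paper, however, does not attempt to prove the trace-ideal fact: it simply cites \cite[Corollary~2.7]{zbMATH03699117} (Whitehead) for the statement that an idempotent two-sided ideal finitely generated on the right is the trace of a projective left module, and then invokes Theorem~\ref{thm:traza de proyectivo}.

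Where your plan remains incomplete is precisely in reproving Whitehead's result. You correctly extract the relation $xY=x$ with $Y\in M_n(I)$ and correctly observe that an idempotent $E\in M_m(R)$ whose entries generate $I$ would finish the argument, but neither of your two routes to $E$ is actually carried out. The block-matrix construction in $M_{2n}(R)$ is only gestured at. The alternative route---showing that $I$ is finitely generated projective as a right module via the candidate dual basis $\psi_i(x_j):=y_{ji}$---has exactly the well-definedness gap you acknowledge: a relation $\sum_j x_j r_j=0$ among the generators does not force $\sum_j y_{ji} r_j=0$, since the decomposition $x_i=\sum_k x_k y_{ik}$ is highly non-unique and the bare hypothesis $I=I^2$ gives no mechanism linking relations among the $x_j$ to your particular choice of $y_{ji}$. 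So the plan is sound at the level of strategy, but the technical core is missing; to close it you would either need to reproduce Whitehead's construction in full or, as the paper does, cite it.
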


\begin{proof}
It was proved in \cite[Corollary 2.7]{zbMATH03699117} that in this
case $I$ is the trace of a projective left module $P$. Therefore,
$R/I$ is a silting module by Theorem \ref{thm:traza de proyectivo}. 
\end{proof}
\begin{thm}
\label{thm:cubierta vs silting}Consider the conditions described in Setting \ref{setting 1y2}
for $S:=R/I$. If $R/I$ admits a projective cover, then $R/I$ is
a silting module. 
\end{thm}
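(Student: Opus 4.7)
The plan is to observe that this theorem is essentially an immediate corollary of the two main results already proved in this section, namely Proposition \ref{prop:a y b vs traza} and Theorem \ref{thm:traza de proyectivo}, which together form a ``both directions'' of our characterization.

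First, I would apply Proposition \ref{prop:a y b vs traza}. Since we are working under Setting \ref{setting 1y2} with $S := R/I$, conditions (a) and (b) from Proposition \ref{prop:1} are in force. Combining these with the hypothesis that $R/I$ admits a projective cover, Proposition \ref{prop:a y b vs traza} directly yields that the idempotent two-sided ideal $I$ is the trace of a projective $R$-module $P$.

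Next, knowing that $I = \Tr(P)$ for some $P \in \Proj(R)$, we are precisely in the situation of Setting \ref{setting2}. Therefore Theorem \ref{thm:traza de proyectivo} applies and produces an explicit morphism $\rho : P \oplus P \oplus R \to R \oplus R$ of projectives with $\Coker(\rho) = R/I$ such that $\mathcal{D}_\rho = \mathcal{T}_I = \Gen(R/I)$. This exhibits $R/I$ as a silting module (with respect to $\rho$), which concludes the proof.

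There is no substantive obstacle here beyond unpacking the hypotheses: the whole content of the argument has already been carried out in the preceding two results. The only thing to be careful about is that the projective presentation of $R/I$ witnessing the silting property is not the original one fixed in Setting \ref{setting 1y2}, but rather the new one $\rho$ constructed from $P$ in Setting \ref{setting2}; since the definition of silting module only requires the existence of \emph{some} projective presentation satisfying $\mathcal{D}_\rho = \Gen(S)$, this switch is harmless.
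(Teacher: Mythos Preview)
Your proposal is correct and matches the paper's own proof exactly: the paper simply writes ``It follows from Proposition \ref{prop:a y b vs traza} and Theorem \ref{thm:traza de proyectivo}.'' Your additional remark that the silting presentation $\rho$ need not coincide with the original $\sigma$ from Setting \ref{setting 1y2} is a helpful clarification that the paper leaves implicit.
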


\begin{proof}
It follows from Proposition \ref{prop:a y b vs traza} and Theorem
\ref{thm:traza de proyectivo}.
\end{proof}
\begin{cor}
Consider the conditions described in Setting \ref{setting 1y2} for $S:=R/I$. If $I$ is
an idempotent ideal such that $I\subseteq J$, where $J$ is the Jacobson
radical of $R$, then $R/I$ is a silting module. 
\end{cor}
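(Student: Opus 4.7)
The plan is to invoke Theorem~\ref{thm:cubierta vs silting} directly: it asserts that, under the standing setting for $S := R/I$, the module $R/I$ is silting as soon as it admits a projective cover. So the whole task reduces to exhibiting a projective cover of $R/I$, and the only hypothesis left to exploit is $I \subseteq J$. The natural candidate is of course the canonical epimorphism $\pi \colon R \twoheadrightarrow R/I$.

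To verify that $\pi$ is a projective cover, I would pick an endomorphism $f \colon R \to R$ of left $R$-modules satisfying $\pi \circ f = \pi$ and show it is an isomorphism. Setting $u := f(1)$, $R$-linearity forces $f(r) = r u$ for every $r \in R$, so $f$ is right multiplication by $u$. The relation $\pi \circ f = \pi$ translates to $u - 1 \in I$, hence $1 - u \in I \subseteq J$. Now apply the classical characterization of the Jacobson radical: every element of the form $1 - x$ with $x \in J$ is a unit of $R$. Consequently $u$ is invertible, right multiplication by $u^{-1}$ provides a two-sided inverse to $f$, and $f$ is an isomorphism.

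Once $\pi$ is known to be a projective cover of $R/I$, Theorem~\ref{thm:cubierta vs silting} immediately yields that $R/I$ is a silting module, completing the proof. There is essentially no obstacle here beyond recalling the Jacobson-radical fact used to identify $\pi$ as a projective cover: the substantive content (the passage from a projective cover to the trace description of $I$, and from there to the silting property via Theorem~\ref{thm:traza de proyectivo}) has already been packaged into Theorem~\ref{thm:cubierta vs silting}, so the corollary is a clean one-step consequence.
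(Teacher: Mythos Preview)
Your proposal is correct and follows the same route as the paper: both show that the canonical projection $R\twoheadrightarrow R/I$ is a projective cover and then invoke Theorem~\ref{thm:cubierta vs silting}. The only difference is cosmetic---the paper cites a reference for the projective-cover fact, while you supply the short direct argument via the Jacobson-radical characterization of units.
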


\begin{proof}
It is known that, in this case, the natural projection $\rho_{R}:R\rightarrow R/I$
is a projective cover (see \cite[Lemma 2.8.36]{zbMATH00193091}).
And thus, the result follows from Theorem \ref{thm:cubierta vs silting}.
\end{proof}
%

A tCG torsion pair in $R$-Mod, is a torsion pair on which the associated Happel-Reiten-Smal\o \ t-structure is compactly generated. For more details, we refer the reader to \cite{bravo2019t}.


\begin{cor}
\label{cor:semiperfectos}Let $R$ be a semiperfect ring and $I$
an idempotent two-sided ideal. Then, the following statements are
equivalent. 
\begin{enumerate}
\item $R/I$ is a silting module. 
\item $I$ is the trace of a projective module. 
\item There is a set $\{M_{i}\}_{i\in X}$ of finitely generated projective
modules in $\mathcal{C}_{I}$ such that $\Gen(\bigoplus_{i\in X}M_{i})=\mathcal{C}_{I}$. 
\item There is a set $\{M_{i}\}_{i\in X}$ of finitely generated modules
in $\mathcal{C}_{I}$ such that $\Gen(\bigoplus_{i\in X}M_{i})=\mathcal{C}_{I}$. 
\item $(\mathcal{C}_{I},\mathcal{T}_{I})$ is a tCG torsion pair. 
\end{enumerate}
\end{cor}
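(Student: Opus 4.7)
The plan is to establish the loop $(1) \Leftrightarrow (2) \Rightarrow (3) \Rightarrow (4) \Rightarrow (2)$ with the tools developed above, and then tie in $(5)$ via results of \cite{bravo2019t}. The equivalence $(1) \Leftrightarrow (2)$ is essentially immediate: since $R$ is semiperfect and $R/I$ is finitely generated, $R/I$ admits a projective cover, so Theorems A and B apply directly.

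For $(2) \Rightarrow (3)$, I would use that over a semiperfect ring every projective module decomposes as a coproduct of finitely generated (indecomposable) projectives. If $I = \Tr(P)$, the standard dual-basis argument yields $I \cdot P = P$, hence $P \in \mathcal{C}_{I}$. Writing $P = \bigoplus_{i \in X} P_i$ with each $P_i$ finitely generated projective, each summand lies in the torsion class $\mathcal{C}_{I}$; moreover $\Gen(\bigoplus_{i \in X} P_i) = \Gen(P) = \mathcal{C}_{I}$ since $P$ generates $I$ (by the definition of the trace) and $\mathcal{C}_{I} = \Gen(I) \supseteq \Gen(P) \ni P$. The step $(3) \Rightarrow (4)$ is a tautology.

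For $(4) \Rightarrow (2)$, for each $M_i$ I would take a projective cover $p_i : P_i \to M_i$, which exists and has $P_i$ finitely generated projective because $R$ is semiperfect. The key observation is that $P_i \in \mathcal{C}_{I}$: applying the right-exact torsion coradical $(1:\c)$ (left adjoint to the inclusion of $\mathcal{T}_{I}$) to the epimorphism $p_i$ yields $(1:\c)P_i \twoheadrightarrow (1:\c)M_i = 0$. Setting $Q := \bigoplus_{i \in X} P_i \in \Proj(R) \cap \mathcal{C}_{I}$, the canonical epimorphism $Q \twoheadrightarrow \bigoplus_{i \in X} M_i$ gives $\Gen(Q) \supseteq \Gen(\bigoplus_{i \in X} M_i) = \mathcal{C}_{I}$, and $Q \in \mathcal{C}_{I}$ yields the reverse inclusion. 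A short check then gives $\Tr(Q) = I$: the image of any map $Q \to R$ is a quotient of $Q$, hence lies in $\mathcal{C}_{I} \cap R = I$; conversely $I \in \mathcal{C}_{I} = \Gen(Q)$ produces an epimorphism $Q^{(\alpha)} \twoheadrightarrow I \hookrightarrow R$, exhibiting $I \subseteq \Tr(Q)$.

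The remaining equivalence $(4) \Leftrightarrow (5)$ should follow from the characterization of tCG torsion pairs in terms of a set of finitely generated generators of the torsion class, as developed in \cite{bravo2019t}; the implication $(4) \Rightarrow (5)$ should be the more or less direct half, while $(5) \Rightarrow (4)$ requires extracting such a generating set from the compactness hypothesis on the Happel--Reiten--Smal\o{} t-structure. I expect that last extraction to be the main obstacle: the other four conditions link together cleanly via the machinery built in Sections~3 and~4, whereas closing the loop through $(5)$ depends on invoking the precise tCG-characterization from \cite{bravo2019t}.
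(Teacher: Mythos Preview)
Your overall loop is close to the paper's (which runs $(a)\Leftrightarrow(b)\Rightarrow(c)\Rightarrow(d)\Rightarrow(e)\Rightarrow(b)$), but there is a genuine gap in your $(4)\Rightarrow(2)$ step. The claim that $P_i\in\mathcal{C}_I$ does not follow from the sentence you wrote: applying the right-exact coradical $(1:\c)$ to the epimorphism $p_i$ only gives an epimorphism $(1:\c)P_i\twoheadrightarrow(1:\c)M_i=0$, and every module admits an epimorphism to $0$, so this says nothing about $(1:\c)P_i$. Right exactness of $(1:\c)$ is simply too weak here; you are implicitly using something like reflection of epimorphisms, which the coradical does not enjoy.

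What is actually needed is the defining property of a projective cover: $\Ker(p_i)$ is superfluous in $P_i$. Since $p_i(IP_i)=I\cdot p_i(P_i)=IM_i=M_i$, one gets $IP_i+\Ker(p_i)=P_i$, and superfluousness forces $IP_i=P_i$, i.e.\ $P_i\in\mathcal{C}_I$. This is exactly the content of \cite[Lemma~7.3]{zbMATH06669084}, which the paper invokes (inside its $(e)\Rightarrow(b)$ argument). Once this is fixed, the rest of your $(4)\Rightarrow(2)$ is fine; your trace computation $\Tr(Q)=I$ via $\mathcal{C}_I$-quotients landing in $I$ is correct and slightly more explicit than the paper's short-exact-sequence argument. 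The remaining difference---you do $(4)\Rightarrow(2)$ directly and treat $(5)$ separately, whereas the paper routes everything through $(e)$---is organisational rather than substantive, and both rely on \cite[Corollary~3.9]{bravo2019t} for the link to tCG torsion pairs.
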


\begin{proof}
Let us assume that $I\neq0$.

$(a)\Longleftrightarrow (b)$ Follows from Theorems A and B. 

$(b)\implies (c)$ Recall that there is a set of idempotents $e_{1},\cdots,e_{n}\in R$
such that every $P\in\Proj(R)$ is isomorphic to a module of the form
\[
Re_{1}^{(A_{1})}\oplus Re_{2}^{(A_{2})}\oplus\cdots\oplus Re_{n}^{(A_{n})}
\]
 \cite[Section 27]{Anderson-Fuller}. Hence, if $I$ is the trace
of a projective module $P$, then there is a set $\{f_{i}\}_{i=1}^{k}\subseteq\{e_{i}\}_{i=1}^{n}$
such that $Rf_{i}\in\mathcal{C}_{I}$ for all $1\leq i\leq k$. Indeed,
since $P\in\mathcal{C}_{I}$, all of its indecomposable summands belong
to $\mathcal{C}_{I}$. Lastly, we note that $\Gen(\bigoplus_{i=1}^{k}Rf_{i})=\Gen(I)=\mathcal{C}_{I}$. 

$(c)\implies (d)$ It is trivial. 

$(d)\implies (e)$ Note that $\mathcal{T}_{I}=\underrightarrow{\lim}\mathcal{T}_{I}$
since $\mathcal{T}_{I}$ is closed under quotients. Therefore, by
\cite[Corollary 3.9]{bravo2019t}, it is enough to prove that $\mathcal{T}_{I}=\bigcap_{i\in X}M_{i}^{\bot_{0}}$.
This equality follows as a consequence of $(d)$. 

$(e)\implies (b)$ Note that $\mathcal{T}_{I}=\underrightarrow{\lim}\mathcal{T}_{I}$
since $\mathcal{T}_{I}$ is closed under quotients.Therefore, by \cite[Corollary 3.9]{bravo2019t},
there is a set $\{M_{i}\}_{i\in X}$ of finitely generated modules
in $\mathcal{C}_{I}$ such that $\mathcal{T}_{I}=\bigcap_{i\in X}M_{i}^{\bot_{0}}$.
Let $P_{i}\rightarrow M_{i}$ be the projective cover of $M_{i}$.
By \cite[Lemma 7.3]{zbMATH06669084}, we know that $P_{i}\in\mathcal{C}_{I}$
for all $i\in X$. Observe that $P_{i}$ also is a finitely generated
module and that $\mathcal{T}_{I}=(\bigoplus_{i\in X}M_{i})^{\bot_{0}}=(\bigoplus_{i\in X}P_{i}$)$^{\bot_{0}}$.
We claim that $I=Tr(P)$ where $P=\bigoplus_{i\in X}P_{i}$. To see this,
consider the natural short exact sequence 
\[
\suc[Tr(P)][I][C]
\]
Next, observe that, $C\in\Gen(I)=\mathcal{C}_{I}$. Since $P\in\Proj(R)$, then 
$C\in P^{\bot_{0}}=\mathcal{T}_{I}$. Therefore, $C=0$ and thus
$I=Tr(P)$. 
\end{proof}
\begin{example}
\label{exa:semiperfecto,local} Let $R$ be a semiperfect ring and
$J$ be the Jacobson radical of $R$. We claim that $R/J$ is not
a silting module whenever $J$ is an idempotent nonzero ideal. Indeed,
if $R/J$ is a silting module, then there is a set $\{M_{i}\}_{i\in X}$
of finitely generated projective modules in $\mathcal{C}_{J}$ such
that $\Gen(\bigoplus_{i\in X}M_{i})=\mathcal{C}_{J}$ by Corollary
\ref{cor:semiperfectos}. Since $J\neq0$ and $J\in\mathcal{C}_{J}$,
there is $i\in X$ such that $M_{i}\neq0$. Note that $M_{i}=JM_{i}$
since $M_{i}\in\mathcal{C}_{J}$. But this contradicts Nakayama Lemma
\cite[Proposition 2.5.24]{zbMATH00193091}. Therefore, $R/J$ is not
a silting module. 
\end{example}

Let $I$ be an idempotent two-sided ideal and $\mathcal{H}_{I}$ be
the heart of the Happel-Reiten-Smal{\o} $t$-structure associated
to $(\mathcal{T}_{I},\mathcal{F}_{I})$. If $\mathcal{T}_{I}$ is
a silting class, then $\mathcal{H}_{I}$ has a projective generator.
However, it is not clear if there is a projective generator in $\mathcal{H}_{I}$
with zero cohomology equal to $R/I$. Therefore, the following question
arises.

\begin{question} Let $I$ be an idempotent two-sided ideal of $R$
and $\mathcal{T}_{I}=\Gen(R/I)$ be a silting class. Is $R/I$ a silting
module? 

\end{question}

 \footnotesize

\vskip3mm 
\noindent Alejandro Argud\'in-Monroy\\
Departamento de Matem\'aticas, Facultad de Ciencias,\\
Universidad Nacional Aut\'onoma de M\'exico,\\ 
Circuito Exterior, Ciudad Universitaria,\\
CDMX 04510, M\'EXICO.\\
{\tt argudin@ciencias.unam.mx}

\vskip3mm 
\noindent Daniel Bravo\\
Instituto de Ciencias F\'isicas y Matem\'aticas\\
Edificio Emilio Pugin, Campus Isla Teja\\
Universidad Austral de Chile\\
5090000 Valdivia, CHILE\\ 
{\tt daniel.bravo@uach.cl}

\vskip3mm 
\noindent Carlos E. Parra\\
Instituto de Ciencias F\'isicas y Matem\'aticas\\
Edificio Emilio Pugin, Campus Isla Teja\\
Universidad Austral de Chile\\
5090000 Valdivia, CHILE\\
{\tt carlos.parra@uach.cl}
\end{document}